\newtheorem{corollary}{Corollary}[section]
\newtheorem{lemma}{Lemma}[section]
\newtheorem{theorem}{Theorem}[section]
\newtheorem{proposition}{Proposition}[section]
\newtheorem{example}{Example}[section]
\newtheorem{construction}{Construction}[section]
\newtheorem{remark}{Remark}[section]
\renewcommand{\S}{\mathcal {S}}
\newcommand{\Rm}[1]{\uppercase\expandafter{\romannumeral #1\relax}}
\newcommand{\vi}{{\mathbf{i}}}
\newcommand{\vw}{{\mathbf{w}}}
\newcommand{\vx}{{\mathbf{x}}}
\newcommand{\vy}{{\mathbf{y}}}
\newcommand{\vz}{{\mathbf{z}}}
\newcommand{\vp}{{\mathbf{p}}}
\newcommand{\vq}{{\mathbf{q}}}
\newcommand{\Rmnum}[1]{\uppercase\expandafter{\romannumeral #1}}
\numberwithin{equation}{section}
\definecolor{cream}{RGB}{203, 237, 204}
\begin{document}

\title{Improvements on Permutation Reconstruction\\ from Minors}

\author{Yiming~Ma, Wenjie~Zhong~and~Xiande~Zhang
        \thanks{Y. Ma ({\tt mym024@mail.ustc.edu.cn}) is with the School of Cyber Security, University of Science and Technology of China, Hefei, 230026, Anhui, China.}
        \thanks{W. Zhong ({\tt zhongwj@mail.ustc.edu.cn}) is with the School of Mathematical Sciences, University of Science and Technology of China, Hefei, 230026, Anhui, China.}

\thanks{X. Zhang ({\tt drzhangx@ustc.edu.cn}) is with the School of Mathematical Sciences,
University of Science and Technology of China, Hefei, 230026, Anhui, China, and with Hefei National Laboratory, University of Science and Technology of China, Hefei, 230088, Anhui, China.}

\thanks{The research is supported by the National Key Research and
Development Programs of China 2023YFA1010201 and 2020YFA0713100, the NSFC
under Grants No. 12171452 and No. 12231014, and the Innovation Program for Quantum Science and Technology
(2021ZD0302902).}
}

\maketitle

\begin{abstract}
  We study the reconstruction problem of permutation sequences  from their $k$-minors, which are subsequences of length $k$ with entries renumbered by $1,2,\ldots,k$ preserving order.  We prove that the minimum number $k$ such that any permutation of length $n$ can be reconstructed from the multiset of its $k$-minors is between $\exp{(\Omega(\sqrt{\ln n}))}$ and $O(\sqrt{n\ln n})$.  These results imply better bounds of a well-studied parameter $N_d$, which is the smallest number such that any permutation of length  $n\ge N_d$ can be reconstructed by its $(n-d)$-minors. The new bounds are $ d+\exp(\Omega(\sqrt{\ln d}))<N_d<d+O(\sqrt{d\ln d})$ asymptotically, and the previous bounds were  $d+\log_2 d<N_d<d^2/4+2d+4$.

\end{abstract}


\section{Introduction}
Reconstructing a combinatorial object from a limited amount of sub-information is a fundamental
problem in computer science. Based on different combinatorial objects, different reconstruction problems have been widely studied due to their applications in  bioinformatics \cite{acharya2015string,batu2004reconstructing},  information theory \cite{ukkonen1985finding}, and  DNA based data storage \cite{golm2022gapped,yazdi2017portable,gabrys2019unique}.

Permutation reconstruction is a variant of the well-known graph reconstruction problem, which arose from the unsolved conjecture of Ulam \cite{ulam1960collection}: any simple graph with at least three vertices can be determined up to isomorphism by the multiset of all its reduced subgraphs with one vertex deleted.
For permutation reconstruction, one considers reconstructing a permutation from its $k$-minors, that is, subsequences of length $k$ with entries renumbered by $1,2,\ldots,k$ preserving order. The multiset of all its $k$-minors is called its $k$-deck.
In 2006, Smith \cite{smith2006permutation} introduced the notation $N_d$, which is the smallest number such that any permutation of length  $n\ge N_d$ can be reconstructed by its $(n-d)$-deck. Raykova \cite{raykova2006permutation} showed the existence of $N_d$ and gave the bounds $d+\log_2 d<N_d<d^2/4+2d+4$.

In this paper, we introduce another notation $s(n)$ for given $n$, that is the smallest integer $k$ such that any permutation of length $n$ can be reconstructed from its $k$-deck, or equivalently, any two permutations of length $n$ have different $k$-decks.
We give lower and upper bounds of $s(n)$ for large $n$, \[3^{0.811\times\log_3^{1/2}(n+1)}\le s(n)\leq 2\lceil\sqrt{(n-2)\ln(n-3)}\rceil +2,\] which imply much better bounds of $N_d$, \[ d+\exp(\Omega(\sqrt{\ln d}))<N_d<d+O(\sqrt{d\ln d})\] for large $d$.  We also provide a feasible algorithm which assists to determine the exact values of $s(n)$ for $n\leq 10$.



\subsection{Related work}

The problem of reconstructing a sequence from the multiset (i.e., the $k$-deck) of  all its subsequences of length $k$,  was introduced by Kalashnik \cite{kalashnik1973reconstruction} in an information-theoretic study about deletion channels. The main task is to determine  the minimum number $k$ such that one can reconstruct any binary sequence of length $n$ from its $k$-deck. The best known bounds of the minimum $k$ are  $ \exp{(\Omega\sqrt{\ln n})}$ \cite{foster2000improvement} and  $O(\sqrt{n})$ \cite{dudik2003reconstruction}. This deck problem has been extended to matrices~\cite{kos2009reconstruction} and general higher dimensions~\cite{zhong2025reconstruction}, for which the minimum number $k$ such that one can reconstruct any binary $d$-dimensional hypermatrix of order $n$ from its $k$-deck, the multiset of its sub-hypermatrices of order $k$, is $O(n^{\frac{d}{d+1}})$.

The problem of partition reconstruction is to ask for which $n$ and $k$ one can uniquely determine any partition
of $n$ from its set of $k$-minors \cite{mnukhin1993combinatorial,cameron1996stories}. Here,
a $k$-minor of a partition $\lambda$ of a positive integer $n > k$ is a partition of $n-k$ whose Young diagram
fits inside that of $\lambda$.
Monks \cite{monks2009solution}  showed that partitions of $n\ge k^2 + 2k$ are uniquely determined by their sets of $k$-minors, which is best possible. Cain and Lehtonen \cite{cain2022reconstructing} completely characterized the standard Young tableaux that can be reconstructed from their sets or multisets of $1$-minors.

For permutation reconstruction, Gouveia and Lehtonen \cite{gouveia2021permutation} showed that every permutation of length $n\ge 5$ is reconstructible from any  $\lceil n/2\rceil + 2$ of its $(n-1)$-minors.  Reconstruction of permutations from other types of minors is also well studied.  Monks \cite{monks2009reconstructing} showed that any permutation of $[n]$ can be reconstructed from its set of \emph{cycle minors} if and only if $n\ge 6$. Lehtonen \cite{lehtonen2015reconstructing} showed that every permutation of a finite set with at least five elements is reconstructible from its identification minors. De Biasi \cite{de2014permutation} proved that the problem of reconstructing a permutation given the absolute differences of consecutive entries is NP-complete.

\subsection{Organization}
The paper is organized as follows. In Section~\ref{notations}, we give necessary definitions and notations, then give a lower bound  of $s(n)$. In Section \ref{upper}, we analyze the relationship between the reconstructibility and some specific functions, and give an upper bound of $s(n)$. In Section \ref{small}, we study  bounds and exact values of $s(n)$ for small $n$ by giving an efficient algorithm, and deduce much better bounds of $N_d$ than that in \cite{raykova2006permutation} from our bounds of $s(n)$.

\section{Notations}\label{notations}

  For positive integers $n$ and $n_1< n_2$, let $[n]:=\{1,2,\ldots,n\}$ and $[n_1,n_2]:=\{n_1,n_1+1,\ldots, n_2\}$. For two sequences $\vx$ and $\vy$, let $\vx\mid \vy$ be the concatenation of them.

 Let $\S_n$ be the set of all permutations on $[n]$.
For a permutation $\vx\in\S_n$, a \emph{$k$-minor} of $\vx$ is a subsequence of  $\vx$ with length $k$ whose entries are renumbered by $1,2,\ldots,k$ preserving order. For example, deleting the first entry of $\vx=25134\in \S_5$, we get a subsequence $5134$ of length four. Then renumber the entries by $\{1,2,3,4\}$ preserving order, we have a $4$-minor $4123$ of $\vx$.
The \emph{$k$-deck} of $\vx$, denoted by $D_{k}(\vx)$, is the multiset of all $k$-minors of $\vx$. If a permutation $\vx\in\S_n$ can be uniquely determined by $D_{k}(\vx)$, we say $\vx$ is \emph{$k$-reconstructible}. Moreover, if $D_k(\vx)=D_k(\vy)$ for two different permutations $\vx,\vy$ in $\S_n$, we say $\vx$ and $\vy$ are $k$-equivalent, and write $\vx\overset{k}{\sim}\vy$. 

\begin{example}\label{eg1}
For $\vx=13524\in \S_5$, $D_4(\vx)=\{2413,1423,1324,1243,1342\}$. It can be verified by computer that different permutations in $\S_5$ have different $4$-decks, so $\vx$ is $4$-reconstructible. However, one can check that $13524\overset{3}{\sim} 14253$, that is, $\vx$ is not $3$-reconstructible.
\end{example}

It is easy to see that for any two $\vx, \vy\in \S_n$,  $D_l(\vx)=D_l(\vy)$ implies that $D_k(\vx)=D_k(\vy)$ for any $k\leq l$. That is, $\vx\overset{l}{\sim}\vy$ implies that $\vx\overset{k}{\sim}\vy$ for any $k\leq l$. So if $\vx$ is $k$-reconstructible, then $\vx$ is $l$-reconstructible for any $k<l\leq n$.
%
By this fact, it is interesting to consider the following problem.

\textbf{Q}:  Given $n$, determine the least integer $k\leq n$ such that any $\vx\in \S_n$ is $k$-reconstructible. Denote this number by $s(n)$.
\bigskip

By Example~\ref{eg1}, $s(5)=4$.  The parameter $s(n)$ is an increasing function of $n$ by the following result. 

\begin{proposition}\label{snincr}
For any positive integer $n$, we have $s(n)\le s(n+1)$.
\end{proposition}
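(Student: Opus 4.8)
The plan is to prove the statement contrapositively by a lifting argument: if two distinct permutations of length $n$ share the same $k$-deck, then appending the largest possible new symbol produces two distinct permutations of length $n+1$ that also share their $k$-deck. Concretely, set $k=s(n+1)$ and suppose toward a contradiction that $s(n)>k$. Since $s(n)\le n$, this forces $k\le n-1$, and by definition of $s(n)$ there exist distinct $\vx,\vy\in\S_n$ with $\vx\overset{k}{\sim}\vy$. Define $\vx'=\vx\mid (n+1)$ and $\vy'=\vy\mid (n+1)$, the permutations in $\S_{n+1}$ obtained by appending the new maximal entry $n+1$. Clearly $\vx'\neq\vy'$ because $\vx\neq\vy$.

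The key step is to compute $D_k(\vx')$ by splitting its $k$-minors according to whether they use the last entry. A $k$-minor that omits the final entry is exactly a $k$-minor of $\vx$, so these contribute the multiset $D_k(\vx)$. A $k$-minor that includes the final entry is obtained by choosing $k-1$ entries of $\vx$ together with the entry $n+1$; since $n+1$ is the largest symbol, after renumbering it always becomes the value $k$ placed at the end, while the other $k-1$ chosen entries keep their relative order and hence renumber exactly to a $(k-1)$-minor of $\vx$. Thus these minors contribute precisely the multiset $\{\,\sigma\mid k : \sigma\in D_{k-1}(\vx)\,\}$. Together this gives a decomposition of $D_k(\vx')$ as the multiset union of $D_k(\vx)$ and this ``capped'' copy of $D_{k-1}(\vx)$, and symmetrically for $\vy'$.

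With this decomposition in hand the conclusion is immediate. From $\vx\overset{k}{\sim}\vy$ we have $D_k(\vx)=D_k(\vy)$, and by the monotonicity fact recorded just before Problem \textbf{Q} (that $D_l(\vx)=D_l(\vy)$ implies $D_{k}(\vx)=D_{k}(\vy)$ for every $k\le l$) we also get $D_{k-1}(\vx)=D_{k-1}(\vy)$. Both pieces of the decomposition therefore agree, so $D_k(\vx')=D_k(\vy')$, i.e. $\vx'\overset{k}{\sim}\vy'$. This contradicts $k=s(n+1)$, which asserts that every permutation of length $n+1$ is $k$-reconstructible. Hence $s(n)\le s(n+1)$.

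I expect the only genuine obstacle to be verifying the deck decomposition cleanly, in particular checking that the renumbering of a subsequence containing the global maximum $n+1$ behaves as claimed: the maximum always maps to $k$ at the last position, and the remaining entries renumber independently. This is exactly where appending the \emph{largest} symbol at the \emph{end}, rather than an arbitrary symbol in an arbitrary place, is essential, since it guarantees the capping is uniform across all minors using the new entry and lets the argument pass through via the stated monotonicity fact rather than requiring a separate induction.
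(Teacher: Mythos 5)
Your proof is correct and takes essentially the same route as the paper: both append the maximal entry $n+1$ to a $k$-equivalent pair in $\S_n$ and use the decomposition $D_k(\vx\mid(n+1))=D_k(\vx)\uplus\{\vw\mid k:\vw\in D_{k-1}(\vx)\}$ together with deck monotonicity, the only difference being that the paper argues directly with $k=s(n)-1$ while you argue by contradiction with $k=s(n+1)$, which is purely a matter of framing. The sole nitpick is the degenerate case $n=1$ (where $\S_1$ contains no distinct pair, so the claimed pair $\vx,\vy$ cannot exist), which the paper dispatches separately via $s(1)=1$; your argument survives this case anyway, since there the forced inequality $k\le n-1=0$ already contradicts $k=s(2)\ge 1$ before any pair is needed.
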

\begin{proof}
It is clear that $s(1)=1,s(2)=2$. So $s(n)\le s(n+1)$ holds for $n=1$. Assume that $n\ge 2$. Since all permutations in $\mathcal{S}_n$ have the same $1$-deck, we have $s(n)\ge 2$. Let $k=s(n)-1\ge 1$. By the definition of $s(n)$, there exist two distinct permutations $\vx,\vy\in \S_n$ such that $D_k(\vx)=D_k(\vy)$, and hence $D_{k-1}(\vx)=D_{k-1}(\vy)$. Then for two distinct permutations $\vx\mid(n+1),\vy\mid(n+1)\in \S_{n+1}$, we have
\begin{equation*}
  \begin{aligned}
  D_k(\vx\mid(n+1))=&D_k(\vx)\uplus \{\vw\mid  k:\vw\in D_{k-1}(\vx)\}\\
  =& D_k(\vy)\uplus \{\vw\mid  k:\vw\in D_{k-1}(\vy)\}= D_k(\vy\mid (n+1)).
  \end{aligned}
\end{equation*}
That is, $\vx\mid(n+1)$ and $\vy\mid(n+1)$ have the same $k$-deck, which implies that $s(n+1)\ge k+1=s(n)$. This completes the proof.
\end{proof}

\bigskip
In order to provide a lower bound of $s(n)$, we establish a   mapping between  binary sequences and  permutations, then  adopt the result in \cite{dudik2003reconstruction} for sequence reconstructions. For a sequence $\vp\in\{0,1\}^n$, let $D'_k(\vp)$ be its $k$-deck, i.e., the multiset of all its subsequences of length $k$.
   Let $s'(n)$ denote  the smallest integer $k$ such that any  sequence in $\{0,1\}^n$ can be reconstructed from its $k$-deck, or equivalently, any two binary sequences of length $n$ have different $k$-decks.

\vspace{0.3cm}

\begin{theorem}\label{seq-perm}
For any positive integer $n$, $s(n)\ge s'(n)$.
\end{theorem}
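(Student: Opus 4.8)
The plan is to transfer a pair of $k$-equivalent binary sequences into a pair of $k$-equivalent permutations through an order-preserving embedding, so that the binary threshold $s'(n)$ becomes a lower bound for the permutation threshold. The case $n=1$ is immediate since $s(1)=s'(1)=1$, so assume $n\ge 2$. Set $k=s'(n)-1$; by the definition of $s'(n)$ there are two distinct words $\vp,\vq\in\{0,1\}^n$ with $D'_k(\vp)=D'_k(\vq)$. If I can manufacture two \emph{distinct} permutations in $\S_n$ with the same $k$-deck, then $s(n)>k=s'(n)-1$, i.e. $s(n)\ge s'(n)$, and the theorem follows.

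First I would record a normalization: equal $k$-decks force equal symbol counts. The binary analogue of the observation preceding Problem \textbf{Q} gives $D'_1(\vp)=D'_1(\vq)$ from $D'_k(\vp)=D'_k(\vq)$ (here $k\ge 1$, which holds because $s'(n)\ge 2$ for $n\ge 2$), and the $1$-deck of a binary word simply counts its zeros and ones. Hence $\vp$ and $\vq$ carry the same number of zeros, say $a$. Next I would define the embedding $\phi$ sending a binary word $\vw$ with $c$ zeros to a permutation as follows: scanning left to right, assign the values $1,\dots,c$ in increasing order to the positions holding $0$, and the values $c+1,\dots,|\vw|$ in increasing order to the positions holding $1$.

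The crux is a \emph{locality} property of $\phi$: for any index set $i_1<\cdots<i_k$, the $k$-minor of $\phi(\vp)$ read off these positions depends only on the binary subword $\vw=p_{i_1}\cdots p_{i_k}$, and in fact equals $\phi(\vw)$. This holds because in $\phi(\vp)$ every value produced by a $0$ is smaller than every value produced by a $1$, while within each class the values increase with position; so after renumbering, the selected zeros receive the ranks $1,\dots,c'$ and the selected ones receive $c'+1,\dots,k$ in left-to-right order, where $c'$ is the number of selected zeros, which is exactly $\phi(\vw)$. Consequently $D_k(\phi(\vp))$ is precisely the elementwise image of the multiset $D'_k(\vp)$ under $\phi$ (each length-$k$ index set contributes $\vw$ to the binary deck and $\phi(\vw)$ to the permutation deck), and likewise for $\vq$. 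Since $\phi$ is a fixed map, $D'_k(\vp)=D'_k(\vq)$ yields $D_k(\phi(\vp))=D_k(\phi(\vq))$ at once. Finally, $\phi$ restricted to words of the common zero-count $a$ is injective, because from $\phi(\vp)$ one recovers the zero-positions as exactly those carrying a value at most $a$; hence $\vp\ne\vq$ gives $\phi(\vp)\ne\phi(\vq)$, so $\phi(\vp)\overset{k}{\sim}\phi(\vq)$ are the desired distinct $k$-equivalent permutations.

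The step I expect to carry the weight is the locality property: the argument hinges on the fact that choosing positions and then renumbering commutes with $\phi$, which rests on the ``zeros below ones, each class increasing'' structure of the embedding together with $\vp,\vq$ sharing the same zero-count $a$ so that both embeddings place their thresholds consistently. Everything else is bookkeeping with multisets, and the only mild subtlety to flag is that $\phi$ is injective only within a fixed zero-count (e.g. $\phi(00)=\phi(01)=\phi(11)$), which is why the normalization step securing a common $a$ is essential.
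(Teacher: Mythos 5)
Your proof is correct and takes essentially the same route as the paper: the paper defines the same order-preserving embedding (denoted $\Psi$, with the roles of $0$ and $1$ swapped relative to your $\phi$) and asserts your key ``locality'' property --- that taking a $k$-minor commutes with the embedding --- as easy to see. Your explicit normalization that equal $k$-decks force equal symbol counts, and the resulting injectivity of the embedding on words of a fixed count, carefully settles the distinctness of the two constructed permutations, a point the paper's proof leaves implicit.
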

\begin{proof}
Let $k=s'(n)-1$. Then there exist two sequences $\vp,\vq\in\{0,1\}^n$ satisfying $D'_k(\vp)=D'_k(\vq)$. It suffices to construct two distinct permutations $\vx,\vy\in \S_n$ such that $D_k(\vx)=D_k(\vy)$, then by the definition of $s(n)$, we have $s(n)>k=s'(n)-1$, i.e., $s(n)\ge s'(n)$.

Define a mapping $\Psi$ from $\{0,1\}^n$ to $\S_n$ as follows.
Suppose $\vp\in\{0,1\}^n$ has $m$ ones. 
Then $\Psi$ maps $\vp$ to a permutation $\vx\in \S_n$ by changing the $m$ ones in $\vp$ into $[m]$ with increasing order and preserving index positions, and  changing the $(n-m)$ zeros in $\vp$ into $[m+1,n]$ with increasing order and preserving index positions. For example, if $\vp=0010011$, then $\vx=4516723$. It is easy to see that, every  $k$ positions in $[n]$ gives a $k$-subsequence   $\vw$ of $\vp$ and a $k$-minor $\vz\in \S_k$ of $\vx$ with $\vz=\Psi(\vw)$.

Let $\vy=\Psi(\vq)$.
Then $D'_k(\vp)=D'_k(\vq)$ implies that $D_k(\vx)=D_k(\vy)$. So we have $s(n)\ge s'(n)$.
\end{proof}


Known values of $s'(n)$  for small $n$  from \cite{dudik2003reconstruction,foster2000improvement} are listed below. Comparing with  Table~\ref{s(n)} in Section~\ref{small}, we known that $s(n)>s'(n)$ in general.
{\color{blue}
\begin{table*}[h!]
\center
\[\begin{array}{cccccccccccc}
\hline
\text{$n$} & 1 & 2 & 3 & 4 & 5 & 6 & 7 & 8 &  9 &  10 & 11 \\
\text{$s'(n)$} & 1 & 2 & 2 & 3 & 3 & 3 & 4 & 4 & 4 & 4 &  4 \\
\hline
\\\hline
\text{$n$} & 12 & 13 & 14 & 15 & 16 & 17 & 18 & 19 & 20 & 21 & 22\\
\text{$s'(n)$} & 5 & 5 & 5 & 5 & [5,8] & [5,8] & [5,8] & [5,8] & [5,8] & [5,8] & [5,8] \\
\hline
\end{array}\]
\caption{Values of $s'(n)$  for small $n$. $[\cdot,\cdot]$ means lower and upper bounds.}
\end{table*} }



In \cite{dudik2003reconstruction},  the authors considered a dual parameter of $s'(n)$: Given $k$, let $S(k)$ denote the least integer $n> k$ such that there exist distinct binary sequences $\vp,\vq\in \{0,1\}^n$ with the same $k$-deck. By \cite{dudik2003reconstruction},
$$S(k)\leq 1.2\Gamma(\log_3 k)\times 3^{3/2\log_3^2 k-1/2\log_3 k}$$ for $k\ge 5.$ By assist of Mathematica, we obtain the following  result for $s(n)$.

\vspace{0.3cm}
\begin{theorem}\label{lower_bound}
$s(n)\geq 3^{0.811\times\log_3^{1/2}(n+1)}$, $n\ge 16$.
\end{theorem}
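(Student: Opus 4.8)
The plan is to derive the lower bound on $s(n)$ from the corresponding bound for binary sequences via Theorem~\ref{seq-perm}, and to lower-bound $s'(n)$ by inverting the cited Dudík--Scott upper bound on the dual parameter $S(k)$. The key reduction is the observation that $S(k)\le n$ forces $s'(n)>k$. Indeed, $S(k)\le n$ means there is some $n_0$ with $k<n_0\le n$ and two distinct $\vp,\vq\in\{0,1\}^{n_0}$ with $D'_k(\vp)=D'_k(\vq)$; appending the same symbol $b$ to both and using the decomposition $D'_k(\vp\mid b)=D'_k(\vp)\uplus\{\vw\mid b:\vw\in D'_{k-1}(\vp)\}$ together with the (binary analogue of the) fact that equal $k$-decks force equal $(k-1)$-decks, one checks $D'_k(\vp\mid b)=D'_k(\vq\mid b)$. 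Iterating $n-n_0$ times yields distinct sequences in $\{0,1\}^n$ with the same $k$-deck, so not all length-$n$ sequences are $k$-reconstructible and hence $s'(n)\ge k+1$. Combined with Theorem~\ref{seq-perm}, this gives the working principle: whenever $S(k)\le n$, we have $s(n)\ge s'(n)\ge k+1$.

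Next I would substitute the bound $S(k)\le 1.2\,\Gamma(\log_3 k)\cdot 3^{(3/2)\log_3^2 k-(1/2)\log_3 k}$ (valid for $k\ge 5$) and choose $k$ as a function of $n$ making the right-hand side at most $n$ while keeping $k$ large. Writing $t=\log_3 k$, the dominant factor is $3^{(3/2)t^2}$, so solving $(3/2)t^2\approx\log_3 n$ suggests $t\approx\sqrt{(2/3)\log_3 n}$, i.e.\ $k\approx 3^{\sqrt{2/3}\,\sqrt{\log_3 n}}$ with $\sqrt{2/3}=0.8164\ldots$. The exponent $0.811$ in the statement is chosen strictly below $\sqrt{2/3}$ precisely to create slack: taking $k=\lceil 3^{0.811\sqrt{\log_3(n+1)}}\rceil-1$ gives $t<0.811\sqrt{\log_3(n+1)}$, hence $(3/2)t^2<0.987\log_3(n+1)$, so the dominant factor is below $(n+1)^{0.987}$, leaving a polynomial gap of order $n^{0.013}$ to absorb the sub-polynomial factor $1.2\,\Gamma(t)\,3^{-t/2}$.

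The final step is to confirm that this gap does absorb $1.2\,\Gamma(t)\,3^{-t/2}$, i.e.\ that $1.2\,\Gamma(\log_3 k)\cdot 3^{(3/2)\log_3^2 k-(1/2)\log_3 k}\le n$ for the chosen $k$ and every $n\ge 16$; this then yields $s(n)\ge k+1\ge 3^{0.811\sqrt{\log_3(n+1)}}$ since $\lceil x\rceil\ge x$. As $t=\log_3 k$ grows like $\sqrt{\log_3 n}$, the factor $\Gamma(t)$ is sub-polynomial in $n$ and is eventually dwarfed by $n^{0.013}$, so the inequality holds for all large $n$, with the precise threshold $n\ge 16$ confirmed by a direct Mathematica-assisted computation. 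One subtlety to handle separately is that the cited bound requires $k\ge 5$: for the small range of $n$ where the chosen $k$ is at most $4$, the inequality $S(k)\le n$ is instead verified directly from the known small values of $S(k)$ (equivalently, from the computed values of $s'(n)$). The main obstacle is exactly this uniform numerical verification over $n\ge 16$---matching the engineered constant $0.811$ and the $+1$ shift against the gamma factor and the constant $1.2$ near the threshold---rather than any conceptual difficulty.
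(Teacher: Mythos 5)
Your proposal is correct and follows essentially the same route as the paper's proof: reduce to binary sequences via Theorem~\ref{seq-perm}, invert the Dud\'{i}k--Schulman upper bound on $S(k)$ by choosing $k\approx 3^{0.811\log_3^{1/2}(n+1)}$, and verify the resulting inequality numerically (Mathematica) down to the threshold $n\ge 16$. The only difference is that you make explicit two steps the paper leaves implicit --- the padding argument showing $S(k)\le n$ implies $s'(n)\ge k+1$, and the reason the constant $0.811<\sqrt{2/3}$ leaves room to absorb the factor $1.2\,\Gamma(\log_3 k)\,3^{-(1/2)\log_3 k}$ --- which strengthens rather than alters the argument.
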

\begin{proof} Let $f(k)=\lfloor1.2\Gamma(\log_3 k)3^{3/2\log_3^2 k-1/2\log_3 k}\rfloor$  for $k\ge 5$, which is the upper bound of $S(k)$. Then $f(k)$ is a non-decreasing function, and $f(5)=16$. Now for any integer $n$, let $k$ be the integer satisfying $f(k)\leq n < f(k+1)$. By definition, there exist distinct sequences $\vp,\vq\in \{0,1\}^n$ with  the same $k$-deck. This means $s'(n)\geq k+1$.

Let $k= 3^{0.811\times \log_3^{1/2}(n+1)}$, we have  $f(k)\leq n$ which can be verified by Mathematica.  By Theorem~\ref{seq-perm}, we obtain the lower bound
\[s(n)\geq s'(n)\geq 3^{0.811\times\log_3^{1/2}(n+1)}.\]
\end{proof}

\vspace{0.3cm}


\bigskip

\section{An Upper Bound of $s(n)$}\label{upper}
In this section, we provide an upper bound of $s(n)$, where the idea is from \cite{krasikov1997reconstruction} working on reconstruction of sequences.

For a permutation $\vz=z_1\ldots z_k\in \S_k$, define its indicator function of $(i,j)$-order as:
\begin{equation}
	z_{ij}=\left\{
	\begin{aligned}
		1 & , & z_i<z_j\\
		0 & , & z_i>z_j
	\end{aligned}
	\right. ~~~~1\leq i<j\leq k.
\nonumber
\end{equation}
Given a permutation $\vx=x_1\ldots x_n\in\S_n$  and its $k$-deck $D_k(\vx)$, let $S_{ij}(\vx)$ be the total sum of $(i,j)$-orders of all minors in $D_k(\vx)$. That is, $S_{ij}(\vx)=\sum_{\vz\in D_k(\vx)}z_{ij}$, $1\leq i<j\leq k$. The next lemma relates the value of $S_{ij}(\vx)$ to the orders of $\vx$.


\begin{lemma}\label{lesj1j2}

For any $1\leq i<j\leq k$,
\[S_{ij}(\vx)=\sum_{1\leq i'<j'\leq n}\binom{i'-1}{i-1}\binom{j'-i'-1}{j-i-1}\binom{n-j'}{k-j}x_{i'j'}.\]

\end{lemma}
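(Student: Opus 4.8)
The plan is to expand $S_{ij}(\vx)$ by summing over the $k$-element position sets that generate the minors, and then to regroup the resulting sum according to which pair of positions lands in coordinates $i$ and $j$. The whole statement is a double-counting identity, so the work is entirely in setting up the bookkeeping cleanly.

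First I would record the key order-preservation fact. Every $k$-minor $\vz\in D_k(\vx)$ arises from a unique choice of positions $1\le p_1<p_2<\cdots<p_k\le n$: one takes the subsequence $x_{p_1}\ldots x_{p_k}$ and renumbers its entries by $[k]$ preserving their relative order. Since renumbering preserves order, for any $1\le i<j\le k$ we have $z_i<z_j$ if and only if $x_{p_i}<x_{p_j}$; hence $z_{ij}=x_{p_i p_j}$. Summing over all minors in the deck then gives
\[S_{ij}(\vx)=\sum_{1\le p_1<\cdots<p_k\le n}x_{p_i p_j}.\]

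Next I would exploit the fact that the summand $x_{p_i p_j}$ depends only on the two positions $p_i$ and $p_j$, and regroup the sum by their values $i':=p_i$ and $j':=p_j$, where necessarily $1\le i'<j'\le n$. For fixed $i'$ and $j'$, the number of admissible completions factors over three independent blocks of unused positions: the $i-1$ positions $p_1<\cdots<p_{i-1}$ must lie in $[i'-1]$, giving $\binom{i'-1}{i-1}$ choices; the $j-i-1$ positions $p_{i+1}<\cdots<p_{j-1}$ must lie in $[i'+1,j'-1]$, giving $\binom{j'-i'-1}{j-i-1}$ choices; and the $k-j$ positions $p_{j+1}<\cdots<p_k$ must lie in $[j'+1,n]$, giving $\binom{n-j'}{k-j}$ choices. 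Multiplying these three binomial coefficients yields exactly the coefficient of $x_{i'j'}$ asserted in the statement, and collecting terms over all pairs $i'<j'$ finishes the proof.

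I do not anticipate a substantial obstacle, since this is a clean enumeration rather than a delicate estimate. The only points demanding care are the index ranges, so that the three blocks of unused positions are correctly located relative to $i'$ and $j'$, and the observation that multiplicities are handled automatically: because $S_{ij}(\vx)$ is summed over position-tuples rather than over distinct minor-values, each repeated minor in the multiset $D_k(\vx)$ is counted the correct number of times without any separate argument.
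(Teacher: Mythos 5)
Your proof is correct and follows essentially the same double-counting argument as the paper: both fix the pair of original positions $(i',j')$ that land in coordinates $i$ and $j$ of a minor, count the completions as the product $\binom{i'-1}{i-1}\binom{j'-i'-1}{j-i-1}\binom{n-j'}{k-j}$, and sum $x_{i'j'}$ weighted by this count. Your write-up merely makes explicit two points the paper leaves implicit, namely the identification $z_{ij}=x_{p_i p_j}$ from order preservation and the handling of multiset multiplicities via position tuples.
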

\begin{proof}
For any $\vz\in D_k(\vx)$, suppose that $z_i,z_j$ are originally $x_{i'}$, $x_{j'}$ for some $i'$ and $j'$, respectively. Write
$$\vx=\underbrace{\ldots}_{i'-1}~~~\emph{$x_{i'}$}~\underbrace{\ldots}_{j'-i'-1}~~~\emph{$x_{j'}$}~\underbrace{\ldots}_{n-j'}.$$
  Then the number of such $\vz\in D_k(\vx)$ with $z_i,z_j$ originally from $x_{i'}$, $x_{j'}$ is
  \[\binom{i'-1}{i-1}\binom{j'-i'-1}{j-i-1}\binom{n-j'}{k-j}.\]
Then $S_{ij}(\vx)$ can be obtained by summing up them multiplied by $x_{i'j'}$. 
\end{proof}

Suppose two permutations $\vx=x_1\ldots x_n,\vy=y_1\ldots y_n$ in $\S_n$  satisfy $\vx\overset{k}{\sim}\vy$. Then $D_k(\vx)=D_k(\vy)$ and thus $S_{ij}(\vx)=S_{ij}(\vy)$ for $1\leq i<j\leq k$. Define $\delta_{ij}:=x_{ij}-y_{ij}\in \{0,\pm1\}$ for $1\leq i<j\leq n$. We have the following equalities by Lemma \ref{lesj1j2},

\begin{equation}\label{eq-ident}
  \sum_{1\leq i'<j'\leq n}\binom{i'-1}{i-1}\binom{j'-i'-1}{j-i-1}\binom{n-j'}{k-j}\delta_{i'j'}=0, \quad 1\leq i<j\leq k.
\end{equation}

Consider bivariate polynomials $f_{ij}(x,y)=\binom{x-1}{i-1}\binom{y-x-1}{j-i-1}\binom{n-y}{k-j}$, $1\leq i<j\leq k$.  Note that $\deg f_{ij}=k-2$ and $f_{ij}(x,y)=0$ for $0\le x<i$, or $y-x<j-i$, or $n-k+j<y\le n$.

\begin{lemma}\label{lemfij}
For fixed integers $n\geq k\geq 1$, the set $\{f_{ij}\}_{1\leq i<j\leq k}$ is a basis for the space of bivariate polynomials of degree at most $k-2$.
\end{lemma}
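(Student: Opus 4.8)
The plan is to combine a dimension count with a triangular evaluation argument. First I would note that the two spaces have equal dimension: the number of pairs $(i,j)$ with $1\le i<j\le k$ is $\binom{k}{2}$, while the space of bivariate polynomials of degree at most $k-2$ has the monomial basis $\{x^a y^b: a,b\ge 0,\ a+b\le k-2\}$, of dimension $\binom{(k-2)+2}{2}=\binom{k}{2}$. Since each $f_{ij}$ already lies in this space (its degree is $k-2$, as noted above), it suffices to prove that the family $\{f_{ij}\}_{1\le i<j\le k}$ is linearly independent.

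Second, I would simplify the $f_{ij}$ by the change of variables $u=x$, $v=y-x$, $w=n-y$ (so that $u+v+w=n$), under which
\[
f_{ij}=\binom{u-1}{a}\binom{v-1}{b}\binom{w}{c},\qquad a=i-1,\ b=j-i-1,\ c=k-j,
\]
and $(a,b,c)$ runs over all nonnegative triples with $a+b+c=k-2$. Because $c$ is determined by $a+b$, I would reindex the family by the pairs $(a,b)$ with $a,b\ge 0$ and $a+b\le k-2$, and denote the corresponding polynomial by $g_{a,b}$.

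Third, which is the crux, I would establish independence by evaluating on a carefully chosen grid. Suppose a linear combination $\sum_{(a,b)}\lambda_{a,b}\,g_{a,b}$ vanishes identically. For each index pair $(a',b')$ I would evaluate at $(u,v,w)=(a'+1,\,b'+1,\,n-a'-b'-2)$, i.e.\ at $(x,y)=(a'+1,\,a'+b'+2)$, obtaining the square matrix with entries $\binom{a'}{a}\binom{b'}{b}\binom{n-a'-b'-2}{c}$. Since $\binom{a'}{a}=0$ unless $a\le a'$ and $\binom{b'}{b}=0$ unless $b\le b'$, the $\bigl((a,b),(a',b')\bigr)$ entry is nonzero only when $(a,b)\le(a',b')$ in the coordinatewise partial order; ordering both rows and columns by any linear extension of this order therefore makes the matrix triangular. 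Its diagonal entries are $\binom{n-a-b-2}{\,k-2-a-b\,}$, and I would verify these are strictly positive precisely because $n\ge k$: the lower index $k-2-a-b\ge 0$, while the upper index satisfies $n-a-b-2\ge n-k\ge 0$ and their difference is exactly $n-k\ge 0$. Hence the evaluation matrix is invertible, so $M\lambda=0$ forces every $\lambda_{a,b}=0$, giving linear independence and thus a basis.

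I expect the main obstacle to be resisting the temptation to argue via top-degree parts: the degree-$(k-2)$ homogeneous components of the $f_{ij}$ are \emph{not} independent (already for $k=3$ they satisfy $-y+(y-x)+x=0$), so the lower-order terms, and in particular the dependence on $n$, cannot be discarded. The change of variables and the choice of evaluation nodes at the ``corners'' $(a'+1,b'+1)$ are exactly what convert the problem into a clean triangular system, and checking positivity of the diagonal is the single point where the hypothesis $n\ge k$ is used.
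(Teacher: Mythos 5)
Your proof is correct and takes essentially the same approach as the paper: your evaluation nodes $(x,y)=(a'+1,\,a'+b'+2)$ are exactly the paper's points $(i_0,j_0)$ with $1\le i_0<j_0\le k$, and your triangularity under the coordinatewise order (with positive diagonal entries $\binom{n-j_0}{k-j_0}$) is precisely the paper's observation that at $(i_0,j_0)$ every term except $\mu_{i_0j_0}\binom{n-j_0}{k-j_0}$ is killed by a vanishing binomial coefficient. The only cosmetic differences are that you make the dimension count explicit and reindex via $(a,b,c)=(i-1,\,j-i-1,\,k-j)$ to exhibit the full evaluation matrix, whereas the paper leaves the count implicit and argues with the first nonzero coefficient in lexicographic order.
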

\begin{proof}
Consider $\varphi(x,y)=\sum_{1\leq i<j\leq k}\mu_{ij}f_{ij}(x,y)=\sum_{1\leq i<j\leq k}\mu_{ij}\binom{x-1}{i-1}\binom{y-x-1}{j-i-1}\binom{n-y}{k-j}$. It suffices to show that $\varphi(x,y)$ is not identically zero whenever the coefficients $\mu_{ij}$ are not all zero. Assume  $\mu_{i_0j_0}$ is the first nonzero coefficient in lexicographical order, that is, whenever $i<i_0$ or $i=i_0$ and $j<j_0$, $\mu_{ij}=0$. Then $\varphi(i_0,j_0)=\mu_{i_0j_0}\binom{n-j_0}{k-j_0}$, since $\binom{i_0-1}{i-1}=0$ for $i>i_0$ and $\binom{j_0-i_0-1}{j-i_0-1}=0$ for $i=i_0$ and $j>j_0$. Hence $\varphi(i_0,j_0)\neq 0$, that is, $\varphi(x,y)$ is not identically zero.
\end{proof}
\vspace{0.3cm}

Combining Lemma~\ref{lemfij} and Eq. (\ref{eq-ident}), we obtain the following necessary condition for $\vx\overset{k}{\sim}\vy$.

\vspace{0.3cm}
\begin{corollary}\label{k_equ}
If $\vx\overset{k}{\sim}\vy$ in $\S_n$, then for any bivariate polynomial $\varphi(x,y)$ of degree at most $k-2$,
\[\sum_{1\leq x<y\leq n}\delta_{xy}\varphi(x,y)=0.\]
\end{corollary}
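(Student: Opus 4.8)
The plan is to recognize that \eqref{eq-ident} already encodes the desired identity for each member of the basis furnished by Lemma~\ref{lemfij}, so that the general statement follows by linearity. First I would rewrite \eqref{eq-ident}: since $f_{ij}(x,y)=\binom{x-1}{i-1}\binom{y-x-1}{j-i-1}\binom{n-y}{k-j}$, the identity \eqref{eq-ident} says exactly that
\[\sum_{1\leq x<y\leq n}\delta_{xy}f_{ij}(x,y)=0 \quad\text{for every } 1\leq i<j\leq k.\]
In other words, the linear functional $L(\varphi):=\sum_{1\leq x<y\leq n}\delta_{xy}\varphi(x,y)$, defined on bivariate polynomials and evaluated at the integer points $(x,y)$ with $x<y$, vanishes on each basis element $f_{ij}$.

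Next I would invoke Lemma~\ref{lemfij}: an arbitrary bivariate polynomial $\varphi(x,y)$ of degree at most $k-2$ can be written as $\varphi=\sum_{1\leq i<j\leq k}\mu_{ij}f_{ij}$ for suitable scalars $\mu_{ij}$, precisely because the $f_{ij}$ form a basis of that space. Applying $L$ and using its linearity (the polynomial identity holds at every evaluation point $(x,y)$, hence may be summed against the weights $\delta_{xy}$),
\[L(\varphi)=\sum_{1\leq i<j\leq k}\mu_{ij}L(f_{ij})=\sum_{1\leq i<j\leq k}\mu_{ij}\cdot 0=0,\]
which is exactly the claimed conclusion.

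I do not expect any genuine obstacle here: the corollary is a formal consequence of the two preceding results, and all the substantive work has already been discharged in Lemmas~\ref{lesj1j2} and~\ref{lemfij}. The one point worth flagging is the dimension bookkeeping implicit in treating $\{f_{ij}\}$ as a \emph{basis} rather than merely a linearly independent family: the space of bivariate polynomials of degree at most $k-2$ has dimension $\binom{k}{2}$, which coincides with the number of pairs $(i,j)$ satisfying $1\leq i<j\leq k$, so the independence established in Lemma~\ref{lemfij} does upgrade to a spanning basis. With that justification in place, the expansion of $\varphi$ used above is legitimate and the proof closes immediately.
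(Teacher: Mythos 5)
Your proof is correct and is exactly the argument the paper intends: the paper gives no separate proof, stating only that the corollary follows by ``combining Lemma~\ref{lemfij} and Eq.~(\ref{eq-ident})'', which is precisely your linearity argument expanding $\varphi$ in the basis $\{f_{ij}\}$ and applying the vanishing identities. Your dimension check ($\binom{k}{2}$ monomials of total degree at most $k-2$ matching the number of pairs $(i,j)$) is the right bookkeeping to justify the word ``basis'' in Lemma~\ref{lemfij}.
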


For easy application, we obtain the following sufficient condition for the reconstructibility by changing bivariate polynomials in \Cref{k_equ} to  univariate polynomials.

\vspace{0.5cm}
\begin{theorem}\label{poly2}
If there exists a polynomial $\phi(x)$ satisfying $\deg \phi\leq k-2$ and $\phi(-1)>(n-2)\sum_{x=0}^{n-3}|\phi(x)|$, then any permutation $\vx\in\S_n$ is $k$-reconstructible. 
\end{theorem}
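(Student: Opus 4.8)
The plan is to use the necessary condition from Corollary~\ref{k_equ} contrapositively: I want to show that if $\vx$ and $\vy$ are distinct permutations in $\S_n$ with $\vx\overset{k}{\sim}\vy$, then the hypothesized polynomial $\phi$ produces a contradiction. Suppose $\vx\neq\vy$; then the difference vector $(\delta_{xy})_{1\le x<y\le n}$ with $\delta_{xy}=x_{xy}-y_{xy}\in\{0,\pm1\}$ is not identically zero, so there is at least one pair $(a,b)$ with $\delta_{ab}\neq 0$. The idea is to build a bivariate polynomial $\varphi(x,y)$ of degree at most $k-2$ from the univariate $\phi$ that isolates this pair: roughly, I would set $\varphi(x,y)=\phi(y-x-2)$ (or a shift of this form), so that the argument $y-x-2$ ranges over $\{-1,0,1,\ldots,n-3\}$ as $(x,y)$ ranges over admissible pairs with $1\le x<y\le n$. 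Crucially $\deg\varphi=\deg\phi\le k-2$, so Corollary~\ref{k_equ} applies and forces $\sum_{1\le x<y\le n}\delta_{xy}\,\varphi(x,y)=0$.

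The next step is to extract a contradiction from this vanishing sum via a dominance argument, which is exactly where the strange-looking hypothesis $\phi(-1)>(n-2)\sum_{x=0}^{n-3}|\phi(x)|$ is engineered to bite. The value $y-x-2=-1$ occurs precisely when $y=x+1$, i.e.\ for adjacent indices, and there are exactly $n-1$ such pairs. I would single out one adjacent pair on which $\delta$ is nonzero (one must argue such a pair can be arranged, or more cleanly, sum over the adjacent pairs whose contribution carries the $\phi(-1)$ weight) and bound the total contribution of all non-adjacent pairs, where $y-x-2\ge 0$. Each such pair contributes $|\delta_{xy}\varphi(x,y)|\le|\phi(y-x-2)|$ with argument in $\{0,1,\ldots,n-3\}$; counting the number of pairs giving each fixed value of $y-x-2$ yields a bound of the form $(n-2)\sum_{x=0}^{n-3}|\phi(x)|$ (each value $v\in\{0,\ldots,n-3\}$ arising from at most $n-2$ pairs). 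The hypothesis says this is strictly smaller than the single term $\phi(-1)$, so the adjacent contribution cannot be cancelled, contradicting the vanishing sum.

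The main obstacle, and the delicate point I expect to need care, is guaranteeing that the $\phi(-1)$-weighted (adjacent) contribution is genuinely nonzero and not itself cancelled. A single $\delta_{ab}\neq 0$ need not lie on the adjacent diagonal $y=x+1$, so the argument must reduce the general case to one where some adjacent $\delta_{x,x+1}\neq 0$, or else redesign $\varphi$ so that the distinguished weight $\phi(-1)$ attaches to whichever pair $(a,b)$ witnesses $\vx\neq\vy$. The cleanest fix is to choose the shift in $\varphi$ adaptively—e.g.\ take $\varphi(x,y)=\phi\bigl((y-x)-(b-a)-1\bigr)$ so that the distinguished argument $-1$ is hit exactly by the gap $b-a$ realized by the witnessing pair—and then verify that all other admissible gaps feed arguments into $\{0,1,\ldots,n-3\}$ so the dominance bound still applies. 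Verifying that this reindexing keeps every non-distinguished term's argument in the nonnegative range, and that the count of competing pairs stays at most $n-2$, is the step where the precise constants $(n-2)$ and the range $0$ to $n-3$ in the hypothesis get used, so I would check that bookkeeping carefully before declaring the contradiction.
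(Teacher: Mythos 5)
Your overall strategy (contradiction via Corollary~\ref{k_equ} plus a dominance argument with a distinguished $\phi(-1)$ term) matches the paper's, but your choice of $\varphi$ cannot work, and the obstacle you yourself flagged is fatal rather than merely delicate. If $\varphi$ depends only on the gap $y-x$, the sum in Corollary~\ref{k_equ} collapses to $\sum_{g}\phi\bigl(g-(b-a)-1\bigr)\Delta_g$, where $\Delta_g=\sum_{y-x=g}\delta_{xy}$ is a \emph{gap-sum}. Your distinguished weight $\phi(-1)$ therefore multiplies the whole gap-sum $\Delta_{b-a}$, not the single witness $\delta_{ab}$, and the adaptive shift only changes \emph{which} gap receives that weight; it does nothing to prevent cancellation inside the gap class. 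Such cancellation genuinely occurs: for $\vx=132$ and $\vy=213$ one computes $\delta_{12}=1$, $\delta_{23}=-1$, $\delta_{13}=0$, so \emph{every} gap-sum vanishes although $\vx\neq\vy$. Hence no choice of witnessing pair, shift, or $\phi$ can force your distinguished term to be nonzero: gap-sums are simply unable to detect that two permutations differ, so the contradiction never materializes. (There is a secondary bookkeeping problem too: unless the distinguished pair is adjacent, gaps smaller than $b-a$ feed arguments $\le -2$ into $\phi$, where the hypothesis gives no control on $|\phi|$; choosing $b-a$ minimal among pairs with $\delta_{ab}\neq 0$ would patch that, but the cancellation issue above still kills the argument.)

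The paper's proof instead makes $\varphi$ a function of the \emph{first coordinate}: $\varphi(x,y)=\phi(x-m-1)$, where $m$ is the first position at which $\vx$ and $\vy$ differ. The sum then collapses onto row-sums $\delta_i=\sum_{j>i}\delta_{ij}$, and these have exactly the non-cancellation property that gap-sums lack. Indeed, $\sum_{j>i}x_{ij}$ counts the entries after position $i$ exceeding $x_i$, a quantity determined by $x_i$ and the prefix $x_1\ldots x_{i-1}$; since $\vx$ and $\vy$ share their prefix up to $m-1$, this forces $\delta_i=0$ for $i<m$ (so the uncontrolled values $\phi(-2),\phi(-3),\ldots$ never appear), while $x_m\neq y_m$ forces $\delta_m\neq 0$, and one may assume $\delta_m\ge 1$. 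Combined with $|\delta_i|\le n-i\le n-2$ for $i>m$, this gives
\begin{equation*}
\sum_{1\le x<y\le n}\delta_{xy}\varphi(x,y)=\delta_m\phi(-1)+\sum_{x=m+1}^{n-1}\phi(x-m-1)\delta_x\ \ge\ \phi(-1)-(n-2)\sum_{x=0}^{n-3}|\phi(x)|>0,
\end{equation*}
contradicting Corollary~\ref{k_equ}. To repair your write-up, replace the gap-based $\varphi$ with this position-based one and prove the two row-sum facts ($\delta_i=0$ for $i<m$, $\delta_m\neq 0$); that is the idea your proposal is missing.
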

\begin{proof}
Assume on the contrary that there exist $\vx\neq \vy\in \S_n$ with $\vx\overset{k}{\sim}\vy$. For $1\leq i\leq n-1$, define $$\delta_i=\sum_{i<j\leq n}\delta_{ij}=\sum_{i<j\le n}(x_{ij}-y_{ij})=\sum_{i<j\le n}x_{ij}-\sum_{i<j\le n}y_{ij}.$$
Let $m$ be the first integer in $[n-1]$ satisfying $x_m\neq y_m$. Note that $\sum_{i<j\le n}x_{ij}$ is  the number of $j>i$ satisfying $x_j>x_i$. So we have  $\sum_{i<j\le n}x_{ij}= \sum_{i<j\le n}y_{ij}$ for all $i\in [m-1]$, and $\sum_{m<j\le n}x_{mj}\neq \sum_{m<j\le n}y_{mj}$. Thus $\delta_i= 0$ for $i\in [m-1]$ and $\delta_m\neq 0$. Without loss of generality, assume $x_m< y_m$, then $\delta_m\geq 1$.

For $i\in [m+1, n-1]$, $|\delta_i|\leq \max\{\sum_{i<j\le n}x_{ij},\sum_{i<j\le n}y_{ij}\}  \leq  n-i\leq n-2$. Define a bivariate polynomial $\varphi(x,y)=\phi(x-m-1)$. Then $\deg\varphi=\deg\phi\leq k-2$ and
\begin{equation*}
\begin{aligned}
\sum_{1\leq x<y\leq n}\delta_{xy}\varphi(x,y)&=\sum_{x=1}^{n-1}\phi(x-m-1)\sum_{y=x+1}^{n}\delta_{xy}=\sum_{x=1}^{n-1}\phi(x-m-1)\delta_x\\
&=\delta_m\phi(-1)+\sum_{x=m+1}^{n-1}\phi(x-m-1)\delta_x\geq \phi(-1)-(n-2)\sum_{x=0}^{n-3}|\phi(x)|>0.
\end{aligned}
\end{equation*}
However, by Corollary \ref{k_equ}, $\sum_{1\leq x<y\leq n}\delta_{xy}\varphi(x,y)=0$, which is a contradiction.
\end{proof}

\vspace{0.5cm}
It remains to construct a polynomial $\phi(x)$ satisfying the conditions in \Cref{poly2}. The construction is based on \cite[Section 3]{foster2000improvement}.

Take $\phi(x)=p_k^2(x),~p_k(x)=\sum_{i=0}^{k}\frac{T_i(-1)}{d_i}T_i(x)$ from \cite[Theorem 3.2]{foster2000improvement}, where $T_i$ is the special case of the Hahn polynomials \cite{karlin1961hahn},\cite{nikiforov1991classical},
$$T_i(x)=\sum_{j=0}^i(-1)^j\frac{\binom{i}{j}\binom{i+j}{j}}{\binom{n}{j}}\binom{x}{j},$$
and
$$d_i=\frac{n+i+1}{2i+1}\frac{\binom{n+i}{n}}{\binom{n}{i}}.$$ Then $p_k$ is of degree $k$, and $\deg\phi=2k$. For fixed $i,j\in [k]$, $T_i$ and $T_j$ are orthogonal on $[0,n]$, i.e.,
\begin{equation}\label{eqtij}
  \sum_{x=0}^n T_i(x)\cdot T_j(x)=\delta_{i,j}d_i.
\end{equation}

Define $\Delta(\phi):=(n+1)\sum_{x=0}^{n}|\phi(x)|-\phi(-1)$  and $F_k:=\sum_{i=0}^{k}T_i^2(-1)/d_i$. By Eq. (\ref{eqtij}), we have
\begin{equation}\label{eqfk}
\begin{aligned}
\Delta(\phi)&=(n+1)\sum_{x=0}^n p_k^2(x)-p_k^2(-1)\\
&=(n+1)\sum_{x=0}^n \left(\sum_{i=0}^{k}\frac{T_i(-1)}{d_i}T_i(x)\right)^2-\left(\sum_{i=0}^{k}\frac{T_i^2(-1)}{d_i}\right)^2\\
&=(n+1)\sum_{i=0}^{k}\frac{T_i^2(-1)}{d_i}-\left(\sum_{i=0}^{k}\frac{T_i^2(-1)}{d_i}\right)^2\\
&=(n+1-F_k)F_k.
\end{aligned}
\end{equation}
Hence when $F_k> n+1$, we have $\Delta(\phi)< 0$.

Based on the above discussions, we provide an upper bound of $s(n)$ as below.

\vspace{0.3cm}
\begin{theorem}\label{upper_bound}
$s(n)\leq 2\lceil\sqrt{(n-2)\ln(n-3)}\rceil+2$ when $n\ge 7$.
\end{theorem}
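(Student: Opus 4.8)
The plan is to apply Theorem~\ref{poly2} to the polynomial $\phi=p_k^2$ (of degree $2k$) for $k=\lceil\sqrt{(n-2)\ln(n-3)}\rceil$, so that everything reduces to the single inequality $F_k>n+1$. Indeed, we may assume $2k+2\le n$ (otherwise $s(n)\le n\le 2k+2$ and there is nothing to prove). As computed in the excerpt, $\phi(-1)=p_k^2(-1)=F_k^2$ and $\sum_{x=0}^n\phi(x)=F_k$, so $F_k>n+1$ gives $\phi(-1)=F_k^2>(n+1)F_k=(n+1)\sum_{x=0}^n\phi(x)$. Since $\phi=p_k^2\ge 0$, we have $(n+1)\sum_{x=0}^n\phi(x)\ge(n-2)\sum_{x=0}^{n-3}\phi(x)$, whence $\phi(-1)>(n-2)\sum_{x=0}^{n-3}|\phi(x)|$ and $\deg\phi=2k\le(2k+2)-2$. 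Theorem~\ref{poly2} then makes every $\vx\in\S_n$ $(2k+2)$-reconstructible, i.e. $s(n)\le 2k+2=2\lceil\sqrt{(n-2)\ln(n-3)}\rceil+2$.

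Thus the entire content is a sharp lower bound on $F_k=\sum_{i=0}^k T_i^2(-1)/d_i$, and the idea is to estimate the two ingredients separately. Since $\binom{-1}{j}=(-1)^j$, the signs cancel and $T_i(-1)=\sum_{j=0}^i\binom ij\binom{i+j}j/\binom nj>0$ is a sum of positive terms; writing $a_i:=\binom{n+i}{i}/\binom ni=\prod_{l=1}^i\frac{n+l}{n-l+1}$, the $j$-th term is comparable to $(i^2/n)^j/j!$, so $T_i(-1)$ grows like $e^{i^2/n}$, of the same order as $a_i$. On the other hand $d_i=\frac{n+i+1}{2i+1}a_i$ by definition, so $\frac{T_i^2(-1)}{d_i}=\frac{2i+1}{n+i+1}\cdot\frac{T_i^2(-1)}{a_i}$. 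The plan is to bound $T_i(-1)$ from below tightly enough that each ratio dominates the telescoping difference $a_{i+1}-a_i=\frac{2i+1}{n+i+1}a_{i+1}$ (equivalently $T_i^2(-1)\ge a_ia_{i+1}$); summing then yields $F_k\ge a_{k+1}-a_0=\frac{\binom{n+k+1}{k+1}}{\binom n{k+1}}-1$, or a quantity of this size.

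It remains to check this lower bound beats $n+1$. By the midpoint rule for the convex function $1/t$, $\ln\frac{n+l}{n-l+1}=\int_{n-l+1}^{n+l}\frac{dt}{t}\ge\frac{2l-1}{n+1/2}$, so $\ln a_{k+1}\ge\frac{(k+1)^2}{n+1/2}$; inserting $k^2\ge(n-2)\ln(n-3)$ should give $a_{k+1}-1>n+1$ for all large $n$, the finitely many small $n$ being trivial since the stated bound is then $\ge n\ge s(n)$. The delicate point—and the main obstacle—is precisely this final margin: at $k\approx\sqrt{n\ln n}$ one has $k^2/n\approx\ln n$, hence $e^{k^2/n}\approx n$, and the crude estimate $F_k\gtrsim a_k-1$ only reaches $\approx n-O(\ln n)$, falling just short of $n+1$. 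The proof must therefore retain the gain obtained by telescoping up to index $k+1$ rather than $k$ (equivalently, the refinement $T_i(-1)\gtrsim e^{i(i+1)/n}$ beyond $a_i\approx e^{i^2/n}$), which supplies the extra $\tfrac{2k}{n}a_k\approx 2\sqrt{n\ln n}$ needed to clear $n+1$ with room to spare. Turning the termwise inequality $T_i^2(-1)\ge a_ia_{i+1}$ (or a summed surrogate, since it can fail for the negligible small-$i$ terms) into a rigorous bound is where the real work lies.
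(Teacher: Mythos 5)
Your opening reduction is correct and coincides with the paper's: take $\phi=p_k^2$, use orthogonality to get $\phi(-1)=F_k^2$ and $\sum_{x=0}^n\phi(x)=F_k$, and reduce everything through \Cref{poly2} to the single inequality $F_k>n+1$ (your positivity remark, replacing the paper's substitution $n\mapsto n+3$, is a legitimate simplification of that step). But the analytic core of the theorem --- a proven lower bound on $F_k$ --- is precisely what your proposal does not contain. You propose the termwise inequality $T_i^2(-1)\ge a_ia_{i+1}$ with $a_i=\binom{n+i}{i}/\binom{n}{i}$, telescoping to $F_k\ge a_{k+1}-1$, you concede that the termwise inequality can fail for small $i$, and you defer the whole matter as ``where the real work lies.'' That is not a minor loose end: without some proven bound of this type there is no proof at all. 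The paper closes exactly this gap by invoking Foster and Krasikov \cite[Section 3.2.1]{foster2000improvement}, namely $F_k>e^{(k+1)k/(n+1)}-1+1/(n+1)$, after which the verification $F_k>n+1$ is a three-line computation.

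There is a second, quantitative problem that would bite even if you imported the Foster--Krasikov bound to fill the gap. You dismiss all non-asymptotic $n$ by claiming the stated bound is then $\ge n$; in fact $2\lceil\sqrt{(n-2)\ln(n-3)}\rceil+2\ge n$ only for $n\le 14$, so every $n\ge 15$ must be handled by the inequality itself. Because you keep Hahn parameter $n$ while taking $k\approx\sqrt{(n-2)\ln(n-3)}$, your exponent $k(k+1)/(n+1)$ falls short of $\ln n$ by roughly $3\ln n/n$, an additive deficit of order $\ln n$ that the $\sqrt{n\ln n}$ gain must absorb on top of the ``$+1$''; with the Foster--Krasikov bound the required inequality $F_k>n+1$ then actually fails at, e.g., $n=15$ (bound $\approx 12.9$ versus the needed $16$) and $n=36$ (bound $\approx 34.5$ versus the needed $37$), both in the nontrivial range. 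This is exactly why the paper proves the equivalent shifted statement $s(n+3)\le 2\lceil\sqrt{(n+1)\ln n}\rceil+2$: with Hahn parameter $n$ and $k\ge\sqrt{(n+1)\ln n}$ one gets $k(k+1)/(n+1)\ge\ln n+\sqrt{\ln n/(n+1)}$, hence $F_k>ne^{\sqrt{\ln n/(n+1)}}-1>n+1$ for every $n\ge 4$, i.e., every length at least $7$. So the shift is not cosmetic; without it (or without your sharper but unproven telescoping bound through index $k+1$) the theorem's stated range $n\ge 7$ is out of reach.
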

\begin{proof} It is equivalent to show that $s(n+3)\leq 2\lceil\sqrt{(n+1)\ln n}\rceil+2$ when $n\ge 4$. Let $k=\lceil\sqrt{(n+1)\ln n}\rceil$.
  By \Cref{poly2}, it suffices to show the existence of a polynomial $\phi$ of degree $2k$ satisfying $\phi(-1)>(n+1)\sum_{x=0}^{n}|\phi(x)|$. Let $\phi(x)=p_k^2(x)$, which is of degree $2k$. By Eq. (\ref{eqfk}), we only need to prove that $F_k> n+1$.
%

By the conclusion of \cite[Section 3.2.1]{foster2000improvement}, we have
 $$F_k>e^{(k+1)k/(n+1)}-1+1/(n+1).$$
Since $k\geq \sqrt{(n+1)\ln n}$, we have
\begin{equation*}
\begin{aligned}
e^{(k+1)k/(n+1)}-1+1/(n+1)&> e^{\ln n+\sqrt{\ln n/(n+1)}}-1\\
&=ne^{\sqrt{\ln n/(n+1)}}-1\\
&> n(1+\sqrt{\ln n/(n+1)})-1>n+1.
\end{aligned}
\end{equation*}
Here, the last inequality is true when $n\geq 4$. Hence $F_k> n+1$.
\end{proof}


\vspace{0.3cm}

\section{Some specific values for small $n$}\label{small}
In this section, we consider exact values of $s(n)$ for small $n$ by assistant of computers. 

For each $\vx\in \S_n$, let $S_j(\vx)$ be the total number of  minors in $D_{k}(\vx)$ with the $j$-th coordinate being  $1$,  $j\in [k]$. For each  $t\in [n-k+1]$, let $i_t(\vx)\in [n-t+1]$ be the location that $t$ lies in $\vx$ after deleting $1,2,\ldots, t-1$. For example, when $\vx=13524$ and $k=2$, $i_1(\vx)=1$, $i_2(\vx)=3$, $i_3(\vx)=1$ and $i_4(\vx)=2$.
 The following lemma relates $S_j(\vx)$  to  values of $i_t(\vx)$. We write $i_t$ instead of $i_t(\vx)$ if there is no confusion.

%

\begin{lemma}\label{chara}

For each $\vx\in \S_n$ and $j\in [k]$, we have
\begin{equation}
S_j(\vx)=\sum_{t=1}^{n-k+1}\binom{i_t-1}{j-1}\binom{n-i_t-(t-1)}{k-j}.
\end{equation} 
\end{lemma}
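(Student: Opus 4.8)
The plan is to prove the formula for $S_j(\vx)$ by a direct combinatorial counting argument, closely mirroring the proof of \Cref{lesj1j2}. The key observation is that $S_j(\vx)$ counts, over all minors $\vz\in D_k(\vx)$, the number of times the $j$-th coordinate of $\vz$ equals $1$; equivalently, it counts pairs (minor $\vz$, witness) where the $j$-th entry of $\vz$ is the \emph{smallest} among the $k$ selected entries. So I would classify each such contribution by which original entry of $\vx$ becomes the minimum of the chosen $k$-subsequence and lands in position $j$ after renumbering.

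First I would fix a minor $\vz$ arising from a choice of $k$ positions of $\vx$, and note that the $j$-th coordinate $z_j=1$ precisely when the entry $x_\ell$ occupying the $j$-th selected position is the minimum value among the $k$ chosen entries. Since renumbering preserves order, $z_j=1$ means $x_\ell$ is smaller than all other selected entries. The crucial reinterpretation is that requiring $x_\ell$ to be the minimum of the selected set, and to sit in the $j$-th slot, decouples into two independent counting tasks once we identify $x_\ell$ with some value $t$ and locate it via the $i_t$ statistic. Concretely, if the chosen minimum has value $t$, then the other $k-1$ selected entries must all exceed $t$; among them, exactly $j-1$ lie to the left of $t$'s position and $k-j$ lie to its right.

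Next I would translate "entries exceeding $t$ lying to the left/right of $t$" into the $i_t$ statistic. After deleting the values $1,2,\ldots,t-1$ from $\vx$, the value $t$ becomes the minimum of the remaining sequence of length $n-t+1$, and it occupies position $i_t$. All $n-t$ remaining entries are exactly the values greater than $t$, of which $i_t-1$ lie to the left of $t$ and $(n-t+1)-i_t = n-i_t-(t-1)$ lie to its right. To form a valid minor in which $t$ sits in position $j$ and is the overall minimum, I choose $j-1$ of the left entries and $k-j$ of the right entries, giving $\binom{i_t-1}{j-1}\binom{n-i_t-(t-1)}{k-j}$ minors. Summing over the admissible values $t\in[n-k+1]$ (values $t>n-k+1$ cannot serve as the minimum of a $k$-subset, as too few larger entries remain) yields the stated identity for $S_j(\vx)$.

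The main obstacle, and the point to argue carefully, is the claim that each minor-with-minimum-in-position-$j$ is counted exactly once in this scheme, i.e.\ that the decomposition by the value $t$ of the selected minimum, together with the independent left/right choices, is a genuine bijection. I would justify this by observing that every $k$-subset of positions of $\vx$ has a unique minimum entry, whose value is some $t$ with $t\le n-k+1$, and that the deletion-based definition of $i_t$ records the position of $t$ among exactly the entries larger than $t$ — so the $j-1$ left and $k-j$ right selections range over disjoint, correctly-sized pools and recover the original subset uniquely. Verifying that the upper summation limit $n-k+1$ and the binomial arguments match the boundary cases (e.g.\ $j=1$ forcing all other entries to the right, $j=k$ forcing them to the left) provides a useful consistency check.
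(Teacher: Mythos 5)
Your proof is correct and follows essentially the same argument as the paper: classify the minors by the value $t$ of the entry that becomes the symbol $1$ (necessarily the minimum of the selected entries, forcing $t\le n-k+1$), use $i_t$ to split the entries larger than $t$ into the $i_t-1$ on the left and the $n-i_t-(t-1)$ on the right, and choose $j-1$ from the left and $k-j$ from the right. Your explicit verification that this counting is a bijection is a point the paper leaves implicit, but the underlying decomposition is identical.
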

\begin{proof}
It is easy to see that only symbols $1,2,\ldots,n-k+1$ in $\vx$ may result in a symbol $1$ in some minor in $D_{k}(\vx)$, since larger numbers cannot be changed to $1$.

For each $t\in[n-k+1]$, the value $i_t(\vx)$ indicates that after deleting all symbols from $[t-1]$ in $\vx$, there are exactly $i_t-1$ symbols appear before the symbol $t$. Keeping the symbol $t$, the symbol $t$ becomes $1$ in the minor.
To make this $1$  in the $j$-th position in some minor, we need to delete $(i_t-j)$ more symbols before $t$, see
$$\underbrace{\cdots}_{i_t-1}~~~\emph{t}\underbrace{\cdots}_{n-i_t-(t-1)}.$$
So the number of minors of length $k$ with the $j$-th position being $1$  is $\binom{i_t-1}{i_t-j}\binom{n-i_t-(t-1)}{n-k-(i_t-j)-(t-1)}=\binom{i_t-1}{j-1}\binom{n-i_t-(t-1)}{k-j}$. 
Summing over $t\in [n-k+1]$, we get the conclusion.
\end{proof}
\vspace{0.3cm}

Denote $\vi(\vx):=(i_1,i_2,\ldots,i_{n-k+1})\in [n]\times [n-1]\times \cdots \times[k]$, and call it the characteristic  vector of $\vx$. By Lemma~\ref{chara}, the value $S_j(\vx)$ only depends on the characteristic  vector of $\vx$.  Different permutations may have the same characteristic  vector. Define the real location of $t$ in $\vx=x_1x_2\ldots x_n$ as $\zeta_t$, that is, $x_{\zeta_t}=t$. Call the vector $\zeta(\vx):=(\zeta_1,\zeta_2,\ldots,\zeta_{n-k+1})$ the location vector of $\vx$.  Clearly, the  characteristic  vector and the location vector can be deduced from each other recursively. See the following lemma.

\vspace{0.3cm}

\begin{lemma}\label{real_location} For each $t\in [n-k+1]$, we have $i_t=\zeta_t-\sum_{j<t} 1_{\{\zeta_t> \zeta_j\}}$ and $\zeta_t=i_t+\sum_{j<t} 1_{\{i_t\ge \zeta_j\}}$.

\end{lemma}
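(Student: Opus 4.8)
The plan is to prove both formulas by carefully tracking how the characteristic vector $\vi(\vx)$ and the location vector $\zeta(\vx)$ encode the same positional information about the small symbols $1,2,\ldots,n-k+1$, merely organized differently. The quantity $\zeta_t$ is the \emph{absolute} position of the symbol $t$ in the full permutation $\vx$, while $i_t$ is the position of $t$ \emph{after} the symbols $1,2,\ldots,t-1$ have been deleted. The relationship between them is therefore an accounting of exactly how many of the already-deleted symbols $1,\ldots,t-1$ sat to the left of $t$: each such symbol, when removed, shifts $t$ one place to the left.

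First I would fix $t\in[n-k+1]$ and count the symbols among $1,2,\ldots,t-1$ that lie strictly to the left of $t$ in $\vx$; by definition of $\zeta$, symbol $j$ lies to the left of $t$ precisely when $\zeta_j<\zeta_t$, i.e. when $1_{\{\zeta_t>\zeta_j\}}=1$. Deleting these symbols moves $t$ leftward by exactly that count, so its position in the reduced sequence is
\begin{equation*}
  i_t=\zeta_t-\sum_{j<t}1_{\{\zeta_t>\zeta_j\}},
\end{equation*}
which is the first identity. For the second identity I would invert the same bookkeeping: starting from the reduced position $i_t$, one must add back one unit for every deleted symbol $j<t$ that originally sat at or before the landing spot of $t$. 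The delicate point is that the correct indicator is $1_{\{i_t\ge\zeta_j\}}$ rather than a comparison against $\zeta_t$ directly, since the restored positions must be measured against the \emph{original} locations $\zeta_j$ while $i_t$ lives in the reduced coordinate system.

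The main obstacle — and the only genuinely nontrivial step — is verifying that the two indicator conditions $1_{\{\zeta_t>\zeta_j\}}$ and $1_{\{i_t\ge\zeta_j\}}$ select the same set of indices $j<t$, so that the two sums agree and the formulas are mutual inverses. I would argue this by observing that for $j<t$ with $\zeta_j<\zeta_t$, the symbol $j$ contributes a leftward shift of $1$ to $t$ and also satisfies $i_t\ge\zeta_j$ after the shift, whereas if $\zeta_j>\zeta_t$ then $j$ lies to the right of $t$, contributes no shift, and one checks $i_t<\zeta_j$; a clean way to see the latter is that $i_t\le\zeta_t<\zeta_j$ since $i_t\le\zeta_t$ always holds. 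This equivalence makes the two relations genuinely inverse to each other, and the recursive deducibility claimed in the statement follows because $\zeta_1,\ldots,\zeta_{t-1}$ (equivalently $i_1,\ldots,i_{t-1}$) are available before one computes the $t$-th coordinate.
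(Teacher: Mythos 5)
Your handling of the first identity is fine, but the second half of your argument has a genuine gap, and it sits exactly at the step you yourself single out as the only nontrivial one. You need the two conditions $\zeta_j<\zeta_t$ and $\zeta_j\le i_t$ to select the same indices $j<t$, and you dispose of the forward implication by simply asserting that a symbol $j$ lying to the left of $t$ ``also satisfies $i_t\ge\zeta_j$ after the shift.'' That assertion is false. Take the paper's own example $\vx=13524$ with $t=4$: there $\zeta_1=1$, $\zeta_2=4$, $\zeta_3=2$, $\zeta_4=5$ and $i_4=2$, so $j=2$ satisfies $\zeta_2=4<5=\zeta_4$ and yet $i_4=2<4=\zeta_2$. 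The reason is that $t$ is shifted left by the \emph{total} number of smaller symbols on its left, so a deleted position $\zeta_j$ can land strictly between $i_t$ and $\zeta_t$, where a comparison against the frozen value $i_t$ no longer detects it. The identity permutation is an even starker instance: there $i_t=1$ for every $t$, so your formula would return $\zeta_t=1+\sum_{j<t}1_{\{1\ge\zeta_j\}}=2$ for every $t\ge2$, which is wrong as soon as $t\ge3$.

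Moreover, this gap cannot be patched, because the second identity as literally stated is false: in the example above it yields $\zeta_4=i_4+\sum_{j<4}1_{\{i_4\ge\zeta_j\}}=2+2=4\neq5$, and for the identity permutation it yields $\zeta_3=2\neq3$. What is true, and what any correct proof must establish instead, is that $\zeta_t$ is the $i_t$-th smallest element of $[n]\setminus\{\zeta_1,\ldots,\zeta_{t-1}\}$; equivalently, one must reinsert the deleted symbols one at a time with a \emph{running} position, setting $p_{t-1}=i_t$ and $p_{j-1}=p_j+1_{\{p_j\ge i_j\}}$ for $j=t-1,\ldots,1$, so that $\zeta_t=p_0$. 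The increments cascade, and each comparison must be made against the updated position, never against the fixed value $i_t$. For what it is worth, the paper's own proof suffers from the same defect: its case analysis is correct for $t=2$, but the phrase ``continuing with this argument'' silently freezes the comparison point at $i_t$, which is exactly where the cascading is lost. So your diagnosis of where the difficulty lies was accurate; the claim you invoked to overcome it is the false step, and it is false because the lemma itself is.
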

\begin{proof}The former equalities are obvious.  We only prove the latter ones.
It is easy to see that $\zeta_1=i_1$.
For $\zeta_2$, if $i_2<\zeta_1$,   deleting $1$  does not affect the position of $2$, so $\zeta_2=i_2$; if $i_2\ge \zeta_1$,   deleting $1$  will result in  one step forward on $2$,
thus $\zeta_2=i_2+1$.
Continuing with this argument, we give
$\zeta_t=i_t+\sum_{j<t} 1_{\{i_t\ge \zeta_j\}}.$
\end{proof}

 In all the above arguments, we consider the position of symbol $1$ in the minors.
Symmetrically, we can consider the position of symbol $k$ in minors.  Define $\bar{S}_j(\vx)$  the number of  minors in $D_{k}(\vx)$ with the $j$-th coordinate being  $k$. For each $t\in [k,n]$, let $\bar{i}_{t}(\vx)\in [t]$  be the location that $t$ lies in $\vx$ after deleting $n,n-1,\ldots, t+1$. By the similar proof as for Lemma~\ref{chara}, we have the following result.


\begin{corollary}\label{charac}
For each $\vx\in \S_n$ and $j\in [k]$, we have
\begin{equation}
\bar{S}_j(\vx)=\sum_{t=k}^{n}\binom{\bar{i}_{t}-1}{j-1}\binom{t-\bar{i}_t}{k-j}.
\end{equation}
\end{corollary}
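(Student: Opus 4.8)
The plan is to mirror the proof of Lemma~\ref{chara} exactly, but reading the permutation from the top down rather than the bottom up. The key observation is perfectly dual: just as only the symbols $1,2,\ldots,n-k+1$ can ever become the symbol $1$ in a $k$-minor, only the symbols $k,k+1,\ldots,n$ can ever become the symbol $k$ in a $k$-minor, since any symbol smaller than $k$ has fewer than $k-1$ symbols below it and so cannot occupy the top position of a length-$k$ minor. Thus $\bar{S}_j(\vx)$ decomposes as a sum over $t\in[k,n]$ of the number of minors in which $t$ survives as the top (i.e.\ renumbered-$k$) symbol and sits in coordinate $j$.

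First I would fix a single symbol $t\in[k,n]$ and count the minors in which $t$ becomes the symbol $k$ and lands in position $j$. The quantity $\bar{i}_t(\vx)$ records the location of $t$ in $\vx$ after all larger symbols $n,n-1,\ldots,t+1$ have been deleted; equivalently, $\bar{i}_t-1$ counts the symbols still preceding $t$ (all of which are smaller than $t$), and $t-\bar{i}_t$ counts the symbols still following $t$ that are smaller than $t$. To force $t$ into the $j$-th position of a $k$-minor as the top element, I must retain exactly $j-1$ symbols from the $\bar{i}_t-1$ available before it and exactly $k-j$ symbols from the $t-\bar{i}_t$ available after it, giving the count $\binom{\bar{i}_t-1}{j-1}\binom{t-\bar{i}_t}{k-j}$. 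Summing over $t$ yields the stated formula. This is a direct order-reversing analogue of the argument for Lemma~\ref{chara}, with $i_t-1$ replaced by $\bar{i}_t-1$ and $n-i_t-(t-1)$ replaced by $t-\bar{i}_t$.

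I expect no genuine obstacle here; the corollary is explicitly asserted to follow ``by the similar proof as for Lemma~\ref{chara}.'' The only point demanding care is verifying that the two binomial arguments are counted correctly under the top-down deletion, i.e.\ confirming that after deleting $n,n-1,\ldots,t+1$ there are precisely $\bar{i}_t-1$ symbols before $t$ and $t-\bar{i}_t$ symbols after it. This holds because at that stage exactly the symbols $1,\ldots,t$ remain, occupying $t$ positions, with $t$ itself in position $\bar{i}_t$, so $\bar{i}_t-1$ precede it and $t-\bar{i}_t$ follow it. With that bookkeeping settled, the binomial coefficients $\binom{\bar{i}_t-1}{j-1}$ and $\binom{t-\bar{i}_t}{k-j}$ arise immediately from choosing which preceding and following symbols to keep, and the summation over $t\in[k,n]$ completes the proof.
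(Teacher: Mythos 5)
Your proposal is correct and is precisely the argument the paper intends: the paper omits the proof, stating only that it follows ``by the similar proof as for Lemma~\ref{chara}'', and you have supplied exactly that dual argument --- deleting the symbols larger than $t$ instead of those smaller, noting that only $t\in[k,n]$ can become the maximum of a $k$-minor, and choosing $j-1$ of the $\bar{i}_t-1$ remaining symbols before $t$ and $k-j$ of the $t-\bar{i}_t$ after it. The bookkeeping you verify (that after deleting $n,n-1,\ldots,t+1$ exactly the symbols $1,\ldots,t$ remain, with $t$ in position $\bar{i}_t$) is the right and only point of care, so nothing is missing.
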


As in Lemma~\ref{real_location}, $\bar{i}_{t},t\in [k,n]$ can also  determine $\zeta_{t},t\in [k,n]$. In fact,  $\zeta_n=\bar{i}_n$, and \begin{equation}\label{eqzet}
              \zeta_{t}=\bar{i}_{t}+\sum_{j>t} 1_{\{\bar{i}_{t}\ge \zeta_{j}\}}
            \end{equation} for $t\in [k,n-1]$. 
           Call $\bar{\vi}(\vx)=(\bar{i}_n,\bar{i}_{n-1},\ldots,\bar{i}_{k})$  the reverse characteristic vector and $\bar{\zeta}(\vx):=(\zeta_n,\zeta_{n-1},\ldots,\zeta_{k})$ the reverse location vector.
            Consequently, for two permutations $\vx\neq \vy$, if $\vi(\vx)=\bar{\vi}(\vy)$, then $\zeta(\vx)=\bar{\zeta}(\vy)$.


\vspace{0.3cm}
%
\vspace{0.3cm}

Now we are ready to apply the above arguments to design an exhaustive algorithm: input $n$ and $k$, and output a pair $(\vx,\vy)\in \S_n\times \S_n$ which meets the conditions $S_j(\vx)=S_j(\vy)$ and $\bar{S}_j(\vx)=\bar{S}_j(\vy)$ for all $j\in [k]$. By the fact that $\vx\overset{k}{\sim} \vy$ implies $S_j(\vx)=S_j(\vy)$ and $\bar{S}_j(\vx)=\bar{S}_j(\vy)$ for $j\in [k],$
if the algorithm returns nothing, it means that any two permutations in $\S_n$ are not equivalent, that is, $s(n)\leq k$.  In fact, our algorithm tries to output a set of permutations in $\S_n$ that are possible to satisfy the above conditions.


\vspace{0.3cm}
\textbf{Algorithm 1}:

Step 1. For each vector $\vi=(i_1,i_2,\ldots, i_{n-k+1})\in [n]\times [n-1]\times \ldots \times [k]$, we compute the corresponding $S_j$ value for each $j\in [k]$ by Lemma \ref{chara}. These values of $S_j$ are stored as a row in a matrix $\mathbf{S}_{l\times k}$ with row index $\vi$, where $l=\frac{n!}{(k-1)!}$. The row $\vi$ of $\mathbf{S}$ corresponds to a set of permutations in $\S_n$  with the same   characteristic  vector  $\vi$ and thus the same $S_j$ values.

Step 2. By pairwise comparing all rows in $\mathbf{S}$, we collect all pairs $(\vi,\vi')$ of rows such that $\mathbf{S}(\vi,j)=\mathbf{S}(\vi',j)$ for each $j\in [k]$. We store all such pairs $(\vi,\vi')$ as a row in an array $\mathbf{P}$ with two columns. Note that $(\vi',\vi)$ is also recorded in $\mathbf{P}$.

Step 3. Considering $\bar{\vi}=(\bar{i}_n,\bar{i}_{n-1},\ldots, \bar{i}_{k})\in [n]\times [n-1]\times \ldots \times [k]$ and values $\bar{S}_j$, we can get exactly the same matrices $\mathbf{S}$ and  $\mathbf{P}$. This means that each row $\vi$ of $\mathbf{S}$ can be viewed as the $S_j$ values of some permutations $\vx$ with $\vi(\vx)=\vi$, and can also be viewed as the $\bar{S}_j$ values of some other permutations $\vx'$ with $\bar{\vi}(\vx')=\vi$.

For each entry $\vi$ in the first column of $\mathbf{P}$, by considering it as a characteristic vector $(i_1,i_2,\ldots, i_{n-k+1})$, we compute a location vector $\zeta(\vx)=(\zeta_1,\zeta_2,\ldots,\zeta_{n-k+1})$ by Lemma~\ref{real_location} for some $\vx$ with $\vi(\vx)=\vi$; by considering it as a reverse characteristic vector $(\bar{i}_n,\bar{i}_{n-1},\ldots, \bar{i}_{k})$, we compute a reverse location vector $\bar{\zeta}(\vx')=(\zeta_n,\zeta_{n-1},\ldots,\zeta_{k})$ by Eq. (\ref{eqzet}) for some $\vx'$ with $\bar{\vi}(\vx')=\vi$.

Step 4. Suppose that $(\vx,\vy)\in \S_n\times \S_n$ satisfies $S_j(\vx)=S_j(\vy)$ and $\bar{S}_j(\vx)=\bar{S}_j(\vy)$ for all $j\in [k]$.  Then there exist two different rows $\vi$ and $\bar{\vi}$, such that  $\vi(\vx)=\vi$ and $\bar{\vi}(\vx)=\bar{\vi}$. Then the location vector $\zeta(\vx)=(\zeta_1,\zeta_2,\ldots,\zeta_{n-k+1})$ computed by $\vi(\vx)=\vi$ and the reverse location vector $\bar{\zeta}(\vx)=(\zeta_n,\zeta_{n-1},\ldots,\zeta_{k})$ computed by $\bar{\vi}(\vx)=\bar{\vi}$ should match each other to form one permutation.

 By definition of $\mathbf{P}$, we could simply pairwise check entries in the first column of $\mathbf{P}$, and output all possible candidates $\vx$. Otherwise, stop and return ``No solution!''.


\vspace{0.3cm}

\begin{remark} If Algorithm 1 returns a set of permutations, we need to continue to pairwise check whether they are equivalent. However, Algorithm 1 is quite useful when it returns no solution, which implies $s(n)\leq k$. In this case, it is very efficient since the data is based on all different vectors $\vi\in [n]\times [n-1]\times \ldots \times [k]$ instead of all different permutations $\vx\in \S_n$, and the algorithm runs  just by computing and comparing a few characteristic values,  which greatly reduces the  computation space  and computation time.

%
\end{remark}
\vspace{0.3cm}

\begin{lemma}\label{alg910}
For $(n,k)\in \{(9,5),(10,5)\}$, Algorithm 1 returns ``No solution!''. So $s(9)\leq 5$ and $s(10)\leq 5$.
\end{lemma}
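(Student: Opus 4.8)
The plan is to treat the statement as a finite, exhaustive verification certified by running Algorithm~1 on the two inputs, and to explain why a negative output yields the claimed bounds. The logical backbone is already in place: if $\vx\overset{k}{\sim}\vy$ for distinct $\vx,\vy\in\S_n$, then $D_k(\vx)=D_k(\vy)$, so in particular $S_j(\vx)=S_j(\vy)$ and $\bar S_j(\vx)=\bar S_j(\vy)$ for every $j\in[k]$, since these quantities are read off from the $k$-deck. Consequently, to certify $s(n)\le k$ it suffices to show that no pair of distinct permutations in $\S_n$ simultaneously meets all $2k$ necessary equalities, and Algorithm~1 performs exactly this search. First I would confirm that on $(n,k)=(9,5)$ and $(n,k)=(10,5)$ the algorithm terminates with ``No solution!''; by the discussion preceding the lemma this immediately gives $s(9)\le 5$ and $s(10)\le 5$.

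Next I would verify feasibility of the search, which is what makes the computation trustworthy rather than heuristic. Rather than enumerating all $n!$ permutations, Algorithm~1 ranges over characteristic vectors $\vi\in[n]\times[n-1]\times\cdots\times[k]$, of which there are $l=n!/(k-1)!$, namely $15120$ for $(9,5)$ and $151200$ for $(10,5)$. For each such $\vi$ the values $S_1,\ldots,S_k$ are computed in closed form by Lemma~\ref{chara}, so Step~1 builds the matrix $\mathbf S$ in $O(lk)$ arithmetic operations. Step~2 then groups the rows by the tuple $(S_1,\ldots,S_k)$ and records colliding pairs in $\mathbf P$; only rows landing in a common bucket can possibly arise from $k$-equivalent permutations, so the number of surviving candidate pairs is tiny compared with $l^2$.

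The delicate part, and the step I expect to be the main obstacle to get right, is Step~4, where a colliding pair of characteristic vectors must be promoted to an actual pair of equivalent permutations. A match in $\mathbf S$ only equates the $S_j$ (respectively $\bar S_j$) values; to realize a genuine $\vx$ one must reconcile the forward data with the reverse data. Concretely, for an entry $\vi$ in the first column of $\mathbf P$ I would compute the location vector $\zeta(\vx)$ from the forward reading of $\vi$ via Lemma~\ref{real_location}, and the reverse location vector $\bar\zeta(\vx')$ from the reverse reading via Eq.~(\ref{eqzet}), and then check whether these two partial descriptions fuse into a single consistent permutation whose $S_j$ and $\bar S_j$ values both collide with those of a distinct permutation. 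The hazard is an off-by-one or indexing error in this fusion that either discards a true equivalent pair or fabricates a spurious one; the safeguard noted in the preceding remark is that any candidate the algorithm does emit is afterwards re-tested directly against the full $k$-deck, so only genuine equivalences would survive. For both $(9,5)$ and $(10,5)$ no candidate even clears the necessary-condition filter of Steps~1--3, so the algorithm returns ``No solution!'' and the desired bounds $s(9)\le 5$ and $s(10)\le 5$ follow.
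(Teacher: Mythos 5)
Your proposal matches the paper's approach: the paper offers no written proof of this lemma beyond the computational certificate itself, relying on the preceding discussion that matching $S_j$ and $\bar{S}_j$ values are necessary for $k$-equivalence so that an empty output of Algorithm~1 forces $s(n)\le k$, and your argument reconstructs exactly that logic together with the feasibility of searching over the $n!/(k-1)!$ characteristic vectors. One caution: your closing claim that no pair even survives the filter of Steps~1--3 is an internal detail you cannot certify blind (the algorithm would equally return ``No solution!'' if pairs survived Step~2 but all failed the fusion check of Step~4), though this does not affect the validity of the argument.
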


\subsection{A table for $s(n)$}

Now we determine or bound values of $s(n)$ for small $n$. We first introduce a related notation studied in \cite{smith2006permutation,raykova2006permutation}.
Given $d$, let $N_d$ be the smallest integer such that for any $n\geq N_d$, we can reconstruct permutations of length $n$ from their $(n-d)$-deck. That is, for all $n\geq N_d$, $D_{n-d}(\vx)\neq D_{n-d}(\vy)$ for different  $\vx,\vy\in\S_n$. Smith \cite{smith2006permutation} proved that $N_1=5, N_2=6$ and $N_3\leq 13$.  Raykova \cite{raykova2006permutation} showed that the existence of $N_d$ for all $d$, and further proved that $N_3=7$ and $N_4\ge 9$, then gave upper and lower bounds by \[d+\log_2 d<N_d<d^2/4+2d+4.\]

By definition, if $N_d\leq n$, that is, for any two different $\vx,\vy\in S_n$, $D_{n-d}(\vx)\neq D_{n-d}(\vy)$, then $s(n)\leq n-d$. So by $N_1=5, N_2=6$ and $N_3=7<8$, we have $s(5)\leq 4,s(6)\leq 4,s(7)\leq 4$ and $ s(8)\leq 5$. By $N_d<d^2/4+2d+4$, $s(d^2/4+2d+4)\leq d^2/4+d+4$ for every $d$. But this is much weaker than our upper bound $s(n)=O(\sqrt{n\ln n})$ in Theorem~\ref{upper_bound}.

If  $N_d=n+1$, that is, there exist two different $\vx,\vy\in S_n$ such that $D_{n-d}(\vx)= D_{n-d}(\vy)$, then $s(n)>n-d$, that is, $s(N_d-1)>N_d-1-d$. So by $N_2=6=5+1$ and $N_3=7=6+1$, we have $s(5)>3$, $s(6)>3$.


Combining the above results, we have $s(5)=s(6)=4$. By $1247356\overset{3}\sim 1263475$ and $68573142\overset{4}{\sim} 75862413$, we have $s(7)=4$ and $s(8)=5$. By Proposition~\ref{snincr}, $s(n)\geq s(8)=5$ when $n\geq 9$. Combining Lemma~\ref{alg910}, we have $s(9)=s(10)=5$.
For $11\le n\le 14$, since $n\ge N_3$,  $s(n)\le n-3$;  by \Cref{snincr}, $s(n)\ge s(10)=5$. For $15\le n\le 19$, since $N_4\leq 4^2/4+2\times4+4-1=15\leq n$, we have $s(n)\le n-4$. For $n\ge 20$, since $N_5<5^2/4+2\times 5+4=20.25$, which means $n\ge N_5$, we have $s(n)\le n-5$. The lower bound of $s(n)$, $16\le n\le 22$ can be obtained from \Cref{lower_bound}.

Finally we list the best  bounds of $s(n)$ for $n\leq 22$ in  Table~\ref{s(n)},  where the first unknown value is $s(11)$.
\begin{table*}[h!]
\center
\[\begin{array}{cccccccccccc}
\hline
\text{$n$} & 1 & 2 & 3 & 4 & 5 & 6 & 7 & 8 &  9 &  10 & 11\\
\text{$s(n)$} & 1 & 2 & 3 & 4 & 4 & 4 & 4 & 5 & 5 & 5 &  [5,8]\\
\hline
\\\hline
\text{$n$} & 12 & 13 & 14 & 15 & 16 & 17 & 18 & 19 & 20 & 21 & 22\\
\text{$s(n)$} & [5,9] & [5,10] & [5,11] & [5,11] & [6,12] & [6,13] & [6,14] & [6,15] & [6,15] & [6,16] & [6,17] \\
\hline
\end{array}\]
\caption{Bounds of $s(n)$ for small $n$.}\label{s(n)}
\end{table*}

%
%

\subsection{Improvements on $N_d$}

Based on our bounds of $s(n)$, we are able to improve bounds of $N_d$ asymptotically.

If $s(n)> k$, that is, there exist two different $\vx,\vy\in S_n$ such that $D_{k}(\vx)= D_{k}(\vy)$,  then $N_{n-k}>n$. Then by Theorem~\ref{lower_bound}, that is, $s(n)=\exp(\Omega(\sqrt{\ln n}))$, we have
\begin{equation}\label{ndlb}
  N_d>d+\exp(\Omega(\sqrt{\ln d})),
\end{equation}
when $d$ is large, which is much stronger than $N_d >d+\log_2 d$ in \cite{raykova2006permutation}.


For the upper bound, we improve as follows.
\begin{theorem}\label{ndub}
   $N_d\leq d+3\sqrt{d\ln d}$  when $d$ is large enough.
\end{theorem}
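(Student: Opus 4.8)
The plan is to convert the upper bound on $s(n)$ from \Cref{upper_bound} into the claimed bound on $N_d$ through the elementary dictionary between the two parameters. Recall that $s(n)\le n-d$ says precisely that every permutation in $\S_n$ is reconstructible from its $(n-d)$-deck, and that $N_d$ is the least integer $N$ for which this holds for all $n\ge N$. Hence it suffices to show $s(n)\le n-d$ for every integer $n\ge d+3\sqrt{d\ln d}-1$: the least integer in this range is at most $d+3\sqrt{d\ln d}$, so once reconstruction holds from that point onward we conclude $N_d\le d+3\sqrt{d\ln d}$. The one-unit cushion here, and the integer-part rounding, are absorbed by the slack deliberately built into the constant $3$ together with the hypothesis that $d$ is large.

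Next I would feed the relevant values into \Cref{upper_bound}, which for $n\ge 7$ gives
\[
s(n)\le 2\bigl\lceil\sqrt{(n-2)\ln(n-3)}\bigr\rceil+2\le 2\sqrt{(n-2)\ln(n-3)}+4,
\]
so it is enough to verify $g(n):=n-2\sqrt{(n-2)\ln(n-3)}-4\ge d$ for all integers $n\ge d+3\sqrt{d\ln d}-1$. To avoid examining infinitely many $n$, I would show that $g$ is eventually increasing: differentiating its smooth extension gives $g'(n)=1-u'(n)/\sqrt{u(n)}$ with $u(n)=(n-2)\ln(n-3)$, and since $u'(n)\sim\ln n$ while $\sqrt{u(n)}\sim\sqrt{n\ln n}$, we have $u'(n)/\sqrt{u(n)}\sim\sqrt{(\ln n)/n}\to 0$, so $g'(n)\to 1>0$. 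Thus $g$ is increasing for all large $n$, in particular on the whole range once $d$ is large, and it suffices to check $g(n)\ge d$ at the single left endpoint.

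It then remains to estimate $g$ at the boundary. Writing $n=d+3\sqrt{d\ln d}$ (the endpoint $-1$ only changes constants) and using $3\sqrt{d\ln d}=o(d)$, I would expand $n-2=d(1+o(1))$ and $\ln(n-3)=\ln d+o(1)$, whence $\sqrt{(n-2)\ln(n-3)}=(1+o(1))\sqrt{d\ln d}$. Substituting,
\[
g(n)=d+3\sqrt{d\ln d}-2\bigl(1+o(1)\bigr)\sqrt{d\ln d}-4=d+\bigl(1-o(1)\bigr)\sqrt{d\ln d}-4\ge d
\]
for all sufficiently large $d$, because the coefficient difference $3-2=1>0$ leaves a positive leading term of order $\sqrt{d\ln d}$ that dominates both the constant $4$ and the endpoint adjustment. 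This closes the reduction and yields the stated bound.

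The main obstacle I anticipate is quantitative rather than structural: making the estimate $\sqrt{(n-2)\ln(n-3)}=(1+o(1))\sqrt{d\ln d}$ sufficiently explicit, with genuine control on the $o(1)$ terms, so that the gap between the coefficients $3$ and $2$ truly swallows these lower-order contributions as well as the rounding incurred in passing from the real threshold to the integer $N_d$. Choosing the constant $3$ (rather than any constant merely exceeding $2$) is exactly what keeps this bookkeeping routine; a sharper constant would demand a much more delicate accounting of the error terms.
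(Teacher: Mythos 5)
Your proposal is correct and takes essentially the same route as the paper's proof: both reduce the claim via \Cref{upper_bound} to the inequality $n-2\sqrt{(n-2)\ln(n-3)}-4\ge d$, verify it asymptotically at the left endpoint $n\approx d+3\sqrt{d\ln d}$ (where the coefficient gap $3-2=1$ gives a surplus of order $\sqrt{d\ln d}$), and extend it to all larger $n$ by monotonicity. Your derivative computation merely makes explicit the monotonicity step that the paper asserts with ``the left hand side grows much faster than the right hand side,'' and your treatment of the integer rounding is a touch more careful, but the structure is identical.
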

\begin{proof}
  Let  $n_0=d+3\sqrt{d\ln d}-2$.  First, we claim that $(n_0-d-2)^2>4n_0\ln (n_0-1)$ when $d$ is large enough. This is true because $(n_0-d-2)^2=9d\ln d(1-o(1))$ and $4n_0\ln (n_0-1)= 4d\ln d(1+o(1))$ when $d$ goes to infinity.

  For each fixed $d$ satisfying $(n_0-d-2)^2>4n_0\ln (n_0-1)$, we next claim that $(n-d-2)^2>4n\ln (n-1)$ for any $n\geq n_0$. This is true since the left hand side grows much faster than the right hand side.

  Finally, we show that for each $n\geq n_0+2= d+3\sqrt{d\ln d}$, any permutation in $\S_{n}$ is $(n-d)$-reconstructible, i.e., $s(n)\leq n-d$. In fact, by \Cref{upper_bound}, we have
  \[s(n)\leq 2\lceil\sqrt{(n-2)\ln(n-3)}\rceil+2< 2\sqrt{(n-2)\ln(n-3)}+4<(n-2)-d-2+4=n-d. \]
  This completes the proof.
\end{proof}

Note that \Cref{ndub} greatly improves the upper bound $N_d<d^2/4+2d+4$ from \cite{raykova2006permutation} asymptotically.  Combining Eq. (\ref{ndlb}) and \Cref{ndub}, we have for large $d$,

\begin{equation*}
  d+\exp(\Omega(\sqrt{\ln d}))<N_d<d+O(\sqrt{d\ln d}).
\end{equation*}

\section{Conclusion}
By applying results for sequence reconstruction, we prove that the least integer $k$ such that any permutation in $\mathcal{S}_n$ is $k$-reconstructible is between $\exp{(\Omega(\sqrt{\ln n}))}$ and $O(\sqrt{n\ln n})$.
As a consequence, we improve the bounds significantly for the well-studied parameter $N_d$, the smallest integer such that any permutation in $\mathcal{S}_n$ with $n\geq N_d$ is $(n-d)$-reconstructible.  The new bounds are $ d+\exp(\Omega(\sqrt{\ln d}))<N_d<d+O(\sqrt{d\ln d})$ asymptotically, and the previous best bounds were  $d+\log_2 d<N_d<d^2/4+2d+4$ in \cite{raykova2006permutation}.


\begin{thebibliography}{10}
\providecommand{\url}[1]{#1}
\csname url@samestyle\endcsname
\providecommand{\newblock}{\relax}
\providecommand{\bibinfo}[2]{#2}
\providecommand{\BIBentrySTDinterwordspacing}{\spaceskip=0pt\relax}
\providecommand{\BIBentryALTinterwordstretchfactor}{4}
\providecommand{\BIBentryALTinterwordspacing}{\spaceskip=\fontdimen2\font plus
\BIBentryALTinterwordstretchfactor\fontdimen3\font minus
  \fontdimen4\font\relax}
\providecommand{\BIBforeignlanguage}[2]{{%
\expandafter\ifx\csname l@#1\endcsname\relax
\typeout{** WARNING: IEEEtran.bst: No hyphenation pattern has been}%
\typeout{** loaded for the language `#1'. Using the pattern for}%
\typeout{** the default language instead.}%
\else
\language=\csname l@#1\endcsname
\fi
#2}}
\providecommand{\BIBdecl}{\relax}
\BIBdecl

\bibitem{acharya2015string}
J.~Acharya, H.~Das, O.~Milenkovic, A.~Orlitsky, and S.~Pan, ``String
  reconstruction from substring compositions,'' \emph{SIAM Journal on Discrete
  Mathematics}, vol.~29, no.~3, pp. 1340--1371, 2015.

\bibitem{batu2004reconstructing}
T.~Batu, S.~Kannan, S.~Khanna, and A.~McGregor, ``Reconstructing strings from
  random traces,'' in \emph{SODA}, vol.~4, 2004, pp. 910--918.

\bibitem{ukkonen1985finding}
E.~Ukkonen, ``Finding approximate patterns in strings,'' \emph{Journal of
  Algorithms}, vol.~6, no.~1, pp. 132--137, 1985.

\bibitem{golm2022gapped}
R.~Golm, M.~Nahvi, R.~Gabrys, and O.~Milenkovic, ``The gapped $k$-deck problem,''
  in \emph{2022 IEEE International Symposium on Information Theory
  (ISIT)}.\hskip 1em plus 0.5em minus 0.4em\relax IEEE, 2022, pp. 49--54.

\bibitem{yazdi2017portable}
S.~H.~T. Yazdi, R.~Gabrys, and O.~Milenkovic, ``Portable and error-free
  DNA-based data storage,'' \emph{Scientific Reports}, vol.~7, no.~1, p. 5011,
  2017.

\bibitem{gabrys2019unique}
R.~Gabrys and O.~Milenkovic, ``Unique reconstruction of coded strings from
  multiset substring spectra,'' \emph{IEEE Transactions on Information Theory},
  vol.~65, no.~12, pp. 7682--7696, 2019.

\bibitem{ulam1960collection}
S.~M. Ulam, ``A collection of mathematical problems,'' \emph{Interscience},
  1960.

\bibitem{smith2006permutation}
R.~Smith, ``Permutation reconstruction,'' \emph{The Electronic Journal of
  Combinatorics}, vol.~13,  N11, 2006.

\bibitem{raykova2006permutation}
M.~Raykova, ``Permutation reconstruction from minors,'' \emph{The Electronic
  Journal of Combinatorics}, vol.~13,  R66, 2006.

\bibitem{kalashnik1973reconstruction}
L.~Kalashnik, ``The reconstruction of a word from fragments,'' \emph{Numerical
  Mathematics and Computer Technology}, pp. 56--57, 1973.

\bibitem{foster2000improvement}
W.~Foster and I.~Krasikov, ``An improvement of a Borwein--Erd{\'e}lyi--K{\'o}s
  result,'' \emph{Methods and Applications of Analysis}, vol.~7, no.~4, pp.
  605--614, 2000.

\bibitem{dudik2003reconstruction}
M.~Dud{\i}k and L.~J. Schulman, ``Reconstruction from subsequences,''
  \emph{Journal of Combinatorial Theory, Series A}, vol. 103, no.~2, pp.
  337--348, 2003.

\bibitem{kos2009reconstruction}
G.~K{\'o}s, P.~Ligeti, and P.~Sziklai, ``Reconstruction of matrices from
  submatrices,'' \emph{Mathematics of Computation}, vol.~78, no. 267, pp.
  1733--1747, 2009.

\bibitem{zhong2025reconstruction}
W.~Zhong and X.~Zhang, ``Reconstruction of hypermatrices from
  subhypermatrices,'' \emph{Journal of Combinatorial Theory, Series A}, vol.
  209, 105966, 2025.

\bibitem{mnukhin1993combinatorial}
V.~Mnukhin, ``Combinatorial properties of partially ordered sets and group
  actions,'' \emph{TEMPUS Lecture Notes: Discrete Mathematics and
  Applications}, vol.~8, 1993.

\bibitem{cameron1996stories}
P.~J. Cameron, ``Stories from the age of reconstruction,'' \emph{Congressus
  Numerantium}, pp. 31--42, 1996.

\bibitem{monks2009solution}
M.~Monks, ``The solution to the partition reconstruction problem,''
  \emph{Journal of Combinatorial Theory, Series A}, vol. 116, no.~1, pp.
  76--91, 2009.

\bibitem{cain2022reconstructing}
A.~J. Cain and E.~Lehtonen, ``Reconstructing young tableaux,'' \emph{Journal of
  Combinatorial Theory, Series A}, vol. 187,  105578, 2022.

\bibitem{gouveia2021permutation}
M.~J. Gouveia and E.~Lehtonen, ``Permutation reconstruction from a few large
  patterns,'' \emph{The Electronic Journal of Combinatorics}, vol.~28,  P3.41, 2021.


\bibitem{monks2009reconstructing}
M.~Monks, ``Reconstructing permutations from cycle minors,'' \emph{The
  Electronic Journal of Combinatorics}, vol.~16,  R19, 2009.

\bibitem{lehtonen2015reconstructing}
E.~Lehtonen, ``Reconstructing permutations from identification minors,''
  \emph{The Electronic Journal of Combinatorics}, vol.~22, P4.20, 2015.

\bibitem{de2014permutation}
M.~De~Biasi, ``Permutation reconstruction from differences,'' \emph{The Electronic Journal of
  Combinatorics}, vol.~21,  P4.3, 2014.

\bibitem{krasikov1997reconstruction}
I.~Krasikov and Y.~Roditty, ``On a reconstruction problem for sequences,''
  \emph{Journal of Combinatorial Theory, Series A}, vol.~77, no.~2, pp.
  344--348, 1997.

\bibitem{karlin1961hahn}
S.~Karlin, ``The hahn polynomials, formulas and an application,'' \emph{Scripta
  Math.}, vol.~26, pp. 33--46, 1961.

\bibitem{nikiforov1991classical}
A.~F. Nikiforov, V.~B. Uvarov, S.~K. Suslov, A.~F. Nikiforov, V.~B. Uvarov, and
  S.~K. Suslov, \emph{Classical orthogonal polynomials of a discrete
  variable}.\hskip 1em plus 0.5em minus 0.4em\relax Springer, 1991.

\end{thebibliography}
\end{document}